\def\@setcopyright{}
\def\serieslogo@{}
\begin{document}

\baselineskip=14.5pt

\bibliographystyle{plain}

\makeatletter
\def\imod#1{\allowbreak\mkern10mu({\operator@font mod}\,\,#1)}
\makeatother
\newcommand{\vectornorm}[1]{\left|\left|#1\right|\right|}
\newcommand{\ip}[2]{\left\langle#1,#2\right\rangle}
\newcommand{\ideal}[1]{\left\langle#1\right\rangle}
\newcommand{\sep}[0]{^{\textup{sep}}}
\newcommand{\Span}[0]{\operatorname{Span}}
\newcommand{\Stab}[0]{\operatorname{Stab}}
\newcommand{\Orb}[0]{\operatorname{Orb}}
\newcommand{\Soc}[0]{\operatorname{Soc}}
\newcommand{\Kernel}[0]{\operatorname{ker }}
\newcommand{\Gal}[0]{\operatorname{Gal}}
\newcommand{\Aut}[0]{\operatorname{Aut}}
\newcommand{\Out}[0]{\operatorname{Out}}
\newcommand{\Inn}[0]{\operatorname{Inn}}
\newcommand{\Image}[0]{\operatorname{im }}
\newcommand{\vol}[0]{\operatorname{vol }}
\newcommand{\pgl}[0]{\operatorname{PGL}_2(\mathbb{F}_3)}
\newcommand{\AGL}[0]{\operatorname{AGL}}
\newcommand{\PGL}[0]{\operatorname{PGL}}
\newcommand{\PSL}[0]{\operatorname{PSL}}
\newcommand{\PSp}[0]{\operatorname{PSp}}
\newcommand{\PSU}[0]{\operatorname{PSU}}
\newcommand{\Sp}[0]{\operatorname{Sp}}
\newcommand{\POmega}[0]{\operatorname{P\Omega}}
\newcommand{\Uup}[0]{\operatorname{U}}
\newcommand{\Gup}[0]{\operatorname{G}}
\newcommand{\Fup}[0]{\operatorname{F}}
\newcommand{\Eup}[0]{\operatorname{E}}
\newcommand{\Bup}[0]{\operatorname{B}}
\newcommand{\Dup}[0]{\operatorname{D}}
\newcommand{\Mup}[0]{\operatorname{M}}
\newcommand{\mini}[0]{\operatorname{min}}
\newcommand{\maxi}[0]{\textup{max}}
\newcommand{\modu}[0]{\textup{mod}}
\newcommand{\nth}[0]{^\textup{th}}
\newcommand{\sd}[0]{\leq_{sd}}
 \newtheorem{theorem}{Theorem}
 \newtheorem{proposition}[theorem]{Proposition}
 \newtheorem{lemma}[theorem]{Lemma}
 \newtheorem{corollary}[theorem]{Corollary}
 \newtheorem{conjecture}[theorem]{Conjecture}
 \newtheorem{definition}[theorem]{Definition}
 \newtheorem{question}[theorem]{Question}
 \newtheorem*{claim*}{Claim}
 \newtheorem{claim}[theorem]{Claim}
 \newtheorem{example}[theorem]{Example}
 \newtheorem*{example*}{Example}
 
 \newcommand{\mc}{\mathcal}
 \newcommand{\mcA}{\mc{A}}
 \newcommand{\mcB}{\mc{B}}
 \newcommand{\mcC}{\mc{C}}
 \newcommand{\mcD}{\mc{D}}
 \newcommand{\mcE}{\mc{E}}
 \newcommand{\mcF}{\mc{F}}
 \newcommand{\mcG}{\mc{G}}
 \newcommand{\mcH}{\mc{H}}
 \newcommand{\mcI}{\mc{I}}
 \newcommand{\mcJ}{\mc{J}}
 \newcommand{\mcK}{\mc{K}}
 \newcommand{\mcL}{\mc{L}}
 \newcommand{\mcM}{\mc{M}}
 \newcommand{\mcN}{\mc{N}}
 \newcommand{\mcO}{\mc{O}}
 \newcommand{\mcP}{\mc{P}}
 \newcommand{\mcQ}{\mc{Q}}
 \newcommand{\mcR}{\mc{R}}
 \newcommand{\mcS}{\mc{S}} 
 \newcommand{\mcT}{\mc{T}}
 \newcommand{\mcU}{\mc{U}}
 \newcommand{\mcV}{\mc{V}}
 \newcommand{\mcW}{\mc{W}}
 \newcommand{\mcX}{\mc{X}}
 \newcommand{\mcY}{\mc{Y}}
 \newcommand{\mcZ}{\mc{Z}}
 \newcommand{\AAA}{\mathbb{A}}
 \newcommand{\CC}{\mathbb{C}}
 \newcommand{\FF}{\mathbb{F}}
 \newcommand{\NN}{\mathbb{N}}
 \newcommand{\PP}{\mathbb{P}}
 \newcommand{\QQ}{\mathbb{Q}}
 \newcommand{\QQd}{\QQ^{(d)}}
 \newcommand{\RR}{\mathbb{R}}
 \newcommand{\ZZ}{\mathbb{Z}}
 \newcommand{\oneto}[1]{\{1,\dots,#1\}}

 \renewcommand{\thefootnote}{\fnsymbol{footnote}}

\def\today{
  \ifcase\month\or
  January\or February\or March\or April\or May\or June\or
  July\or August\or September\or October\or November\or December\fi
  \space\number\day, \number\year}

\author[Itamar Gal]{Itamar Gal}
\address{Department of Mathematics, University of Texas at Austin, 2515 Speedway, Stop C1200\\Austin, TX 78712, U.S.A.}
\email{igal@math.utexas.edu}
\author[Robert Grizzard]{Robert Grizzard$^*$}
\address{Department of Mathematics, University of Texas at Austin, 2515 Speedway, Stop C1200, Austin, TX 78712, U.S.A.}
\email{rgrizzard@math.utexas.edu}
\thanks{$^*$The second author's research was partially supported by a grant from the National Security Agency, H98230-12-1-0254.}
\title[On the compositum]{On the compositum of all degree $d$ extensions of a number field}

\begin{abstract}
Let $k$ be a number field, and denote by $k^{[d]}$ the compositum of all degree $d$ extensions of $k$ in a fixed algebraic closure.  We first consider the question of whether all algebraic extensions of $k$ of degree less than $d$ lie in $k^{[d]}$.  We show that this occurs if and only if $d < 5$.  Secondly, we consider the question of whether there exists a constant $c$ such that if $K/k$ is a finite subextension of $k^{[d]}$, then $K$ is generated over $k$ by elements of degree at most $c$.  This was previously considered by Checcoli.  We show that such a constant exists if and only if $d < 3$.  This question becomes more interesting when one restricts attention to Galois extensions $K/k$.  In this setting, we 
derive certain divisibility conditions on $d$ under which such a constant does not exist.  If $d$ is prime, we prove that all finite Galois subextensions of $k^{[d]}$ are generated over $k$ by elements of degree at most $d$.
\end{abstract}

\subjclass[2010]{12F10, 11R21, 20B05}
\keywords{number fields, infinite algebraic extensions, Galois theory, permutation groups}
\maketitle

\section{Introduction}\label{intro}
Let $k$ be a field.  Throughout this paper, all extensions of $k$ will be assumed to lie in a fixed algebraic closure $\overline{k}$.  We are interested in fields obtained by adjoining to $k$ all roots of irreducible polynomials of a given degree $d$.  For any positive integer $d$ we will write
\begin{align}
k^{[d]} &= k(\beta ~\big|~ [k(\beta) : k] = d), ~\textup{and}\label{kbrd}\\
k^{(d)} &= k(\beta ~\big|~ [k(\beta) : k] \leq d) = k^{[2]}k^{[3]}k^{[4]} \cdots k^{[d]}.\label{kpad} 
\end{align}

We have $k^{[1]} = k^{(1)} = k$, and when $d > 1$ it is clear that $k^{[d]}$ and $k^{(d)}$ are normal extensions of $k$.  We are primarily interested in the case where $k$ is a number field, in which case these are infinite Galois extensions.  When $d > 2$ it is natural to ask what polynomials of degree less than $d$ split in $k^{[d]}$.  If $c < d$ and all irreducible polynomials of degree $c$ split in $k^{[d]}$, then $k^{[c]} \subseteq k^{[d]}$.  Notice that this occurs in particular when $c$ divides $d$, since every degree $c$ extension admits a degree $d/c$ extension.  If all polynomials of degree less than $d$ split in $k^{[d]}$, then $k^{[d]} = k^{(d)}$.  We will prove the following results along these lines.

\begin{theorem}\label{thm1} If $k$ is a number field \footnote[2]{Many of our results contain the hypothesis that $k$ is a number field or global function field.  However, the astute reader will notice after reading the proofs that this hypothesis could be replaced with more technical restrictions on the field $k$ -- specifically, that certain embedding problems have solutions over $k$.}, then 
\begin{enumerate}
 \item[(a)] $k^{[2]} \subseteq k^{[d]}$ for all $d \geq 2,$
 \item[(b)] $k^{[3]} \subseteq k^{[4]},$ and
 \item[(c)] for each $d \geq 5$, there exists a prime $p < d$ such that $k^{[p]} \not \subseteq k^{[d]}.$
\end{enumerate}
\end{theorem}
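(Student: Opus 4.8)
The plan is to prove the three parts by separate arguments. For \textbf{(a)}, if $d$ is even I would note that any quadratic field $k(\beta)$ has an extension of degree $d/2$, which is a degree-$d$ extension of $k$ containing $\beta$, so $k(\beta)\subseteq k^{[d]}$ and hence $k^{[2]}\subseteq k^{[d]}$. If $d$ is odd and $a\in k^\times$ is a non-square (the square case being trivial), I would use the trinomial $f_b(x)=x^d+bx+b$: since $d-1$ is even, $\operatorname{disc}(f_b)\equiv(-1)^{d(d-1)/2}\bigl(b+d^d/(d-1)^{d-1}\bigr)\pmod{(k^\times)^2}$, an affine function of $b$, so I may choose $b$ so that this lies in $a(k^\times)^2$. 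Writing the admissible $b$ as $b(s)$, $s\in k^\times$, the polynomial $x^d+b(s)x+b(s)$ is irreducible over $k(s)$ (it is Eisenstein over the index-two subfield $k(b(s))$, and any factorisation over $k(s)$ would split it into two factors of degree $d/2$, impossible for odd $d$); Hilbert's irreducibility theorem then yields $s_0\in k^\times$ with $f_{b(s_0)}\in k[x]$ irreducible and separable. Its splitting field is the Galois closure of a degree-$d$ field, hence lies in $k^{[d]}$, and it contains $k\bigl(\sqrt{\operatorname{disc}f_{b(s_0)}}\bigr)=k(\sqrt a)$.

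For \textbf{(b)}, since $k^{[3]}$ is the compositum of all cubic fields and each cubic field lies in its Galois closure, it suffices to place every Galois extension of $k$ with group $C_3$ or $S_3$ inside $k^{[4]}$. The surjections $A_4\twoheadrightarrow A_4/V_4\cong C_3$ and $S_4\twoheadrightarrow S_4/V_4\cong S_3$ split, so the associated finite split embedding problems are solvable over the number field $k$: a given $C_3$- or $S_3$-extension $M/k$ embeds into a Galois extension $N/k$ with group $A_4$, respectively $S_4$. (For the $C_3$ case one can be explicit: with $\Gal(M/k)=\langle\sigma\rangle$ and $\alpha=\beta/\beta^\sigma$ for sufficiently general $\beta\in M^\times$, the field $M(\sqrt\alpha,\sqrt{\alpha^\sigma})$ is Galois over $k$ with group $A_4$, since $N_{M/k}(\alpha)=1$ puts $\sqrt{\alpha^{\sigma^2}}$ inside it.) The fixed field of a point stabiliser in $N$ is then a quartic field with Galois closure $N$, so $M\subseteq N\subseteq k^{[4]}$.

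For \textbf{(c)}, the first step is a reduction. If $L_1,\dots,L_m$ are degree-$d$ extensions of $k$ with Galois closures $\widetilde L_i$, then $\Gal(\widetilde L_1\cdots\widetilde L_m/k)$ is a subdirect product of the transitive groups $G_i=\Gal(\widetilde L_i/k)\le S_d$, and the composition factors of a subdirect product lie among those of its factors (induct on $m$, using that in a subdirect product of two groups the kernel of one projection is isomorphic to a normal subgroup of the other factor). Every finite subextension of $k^{[d]}$ lies in such a compositum, and every number field has a cyclic extension of each prime degree $p$ (e.g.\ a suitable subfield of a cyclotomic field); hence it suffices to exhibit, for each $d\ge5$, a prime $p<d$ that is not a composition factor of any transitive subgroup of $S_d$, since then no $C_p$-extension of $k$ lies in $k^{[d]}$, while one lies in $k^{[p]}$, so $k^{[p]}\not\subseteq k^{[d]}$. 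To produce $p$ when $d\ge8$ I would take a prime with $d/2<p\le d-3$ (which exists, by Bertrand's postulate and a prime-gap estimate, after checking the finitely many small $d$); if a transitive $G\le S_d$ has $p\mid|G|$, then an element of order $p$ in $G$ is a single $p$-cycle because $p>d/2$, and the same inequality forces $G$ primitive, so by Jordan's theorem $G\supseteq A_d$, which has no $C_p$ composition factor as $A_d$ is simple nonabelian for $d\ge5$. For $d\in\{5,6,7\}$ I would argue directly from the short lists of transitive groups: take $p=3$ for $d=5$ (the transitive groups being $C_5,D_5,\AGL_1(5),A_5,S_5$, with composition factors only $C_2,C_5,A_5$), and $p=5$ for $d=6,7$ (a transitive group of degree $6$ or $7$ of order divisible by $5$ contains a $5$-cycle, hence is primitive, hence is one of $\PSL_2(5),\PGL_2(5),A_6,S_6,A_7,S_7$, none having a $C_5$ composition factor).

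I expect the main obstacle to be this last step of \textbf{(c)}: arranging a single good prime for every $d\ge5$. This has a number-theoretic side — guaranteeing a prime in $(d/2,d-3]$ for all $d\ge8$, which requires an explicit prime-gap bound plus a finite check, together with the separate handling of $d=5,6,7$ — and a group-theoretic side, which is comfortably classical (Jordan's theorem on primitive groups containing a cycle of prime length, and the enumeration of small-degree transitive groups; no appeal to the classification of finite simple groups is needed). Parts \textbf{(a)} and \textbf{(b)} should be comparatively routine given Hilbert irreducibility and the solvability of split embedding problems over number fields.
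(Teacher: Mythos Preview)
Your proposal is correct and follows the same three-part architecture as the paper, but several of your implementation choices differ from the paper's in interesting ways.

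For part (a), the paper simply cites a theorem of O.~Neumann solving the embedding problem $(K/k,S_d,A_d)$ for every quadratic $K/k$, whereas you give a self-contained argument via the trinomials $x^d+bx+b$ and Hilbert irreducibility. Your route is closer in spirit to the paper's ``elementary'' appendix, which treats only $k=\QQ$ with explicit Eisenstein/Dumas irreducibility checks; your use of Hilbert irreducibility over the function field $k(s)$ is what lets the same idea work uniformly over all number fields.

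For part (b), both you and the paper solve the embedding problem with kernel $V_4$; the paper invokes Shafarevich's theorem on embedding problems with nilpotent kernel, while you invoke solvability of \emph{split} embedding problems with abelian kernel over Hilbertian fields (and sketch the explicit $A_4$ construction). These are essentially the same input dressed differently. Your $S_3$ case is left to the general split-abelian-kernel result; that is legitimate, though you might note that the explicit biquadratic construction you gave for $C_3$ extends verbatim to $S_3$ (this is exactly the construction the paper alludes to).

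For part (c), your argument is in fact cleaner than the paper's. Both choose a prime $p\in(d/2,d-3]$ and show that the only transitive degree-$d$ groups of order divisible by $p$ are $A_d$ and $S_d$; the paper does this via Wielandt's Propositions~8.5 and~13.9, while your observation that $p>d/2$ already forces primitivity (since any block system would embed $G$ in a wreath product of order prime to $p$) followed by Jordan's theorem is more direct. More significantly, to pass from single transitive groups to subdirect products, the paper runs an induction using Goursat's lemma (its Proposition~\ref{noquot}), whereas your one-line observation that the composition factors of a subdirect product lie among those of its factors immediately rules out $C_p$ as a quotient. That is a genuine simplification. Your anticipated ``main obstacle'' --- guaranteeing the prime in $(d/2,d-3]$ --- is handled exactly as the paper does: Bertrand's postulate for $d\ge 8$, and a by-hand check for $d=5,6,7$.
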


The following corollary is immediate.

\begin{corollary}\label{kdkd} If $k$ is a number field, then $k^{[d]} = k^{(d)}$ if and only if $d < 5$.
\end{corollary}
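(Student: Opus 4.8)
The plan is to read the corollary directly off of Theorem~\ref{thm1}. The key observation is that, by the definition \eqref{kpad} of $k^{(d)}$ as the compositum $k^{[2]}k^{[3]}\cdots k^{[d]}$, one has $k^{[d]} = k^{(d)}$ if and only if $k^{[c]} \subseteq k^{[d]}$ for every integer $c$ with $2 \le c \le d-1$. So the whole statement reduces to comparing the fields $k^{[c]}$ with $k^{[d]}$, which is exactly what the three parts of Theorem~\ref{thm1} let us do.

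For the forward implication, I would assume $d < 5$ and dispose of the finitely many cases. When $d \in \{1,2\}$ there is nothing to prove, since $k^{[1]} = k^{(1)} = k$ and $k^{[2]} = k^{(2)}$ by definition. When $d = 3$ the only condition to check is $k^{[2]} \subseteq k^{[3]}$, which is Theorem~\ref{thm1}(a). When $d = 4$ one must verify $k^{[2]} \subseteq k^{[4]}$ and $k^{[3]} \subseteq k^{[4]}$: the first is again Theorem~\ref{thm1}(a) applied with $d=4$, and the second is Theorem~\ref{thm1}(b).

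For the reverse implication, I would assume $d \ge 5$ and invoke Theorem~\ref{thm1}(c), which supplies a prime $p < d$ with $k^{[p]} \not\subseteq k^{[d]}$. Since $p$ is prime we have $p \ge 2$, and $p < d$ gives $p \le d-1$, so $k^{[p]}$ occurs among the factors of the compositum defining $k^{(d)}$; in particular $k^{[p]} \subseteq k^{(d)}$. Thus $k^{[p]} \subseteq k^{(d)}$ while $k^{[p]} \not\subseteq k^{[d]}$, forcing $k^{(d)} \ne k^{[d]}$.

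I do not anticipate any genuine obstacle here, as the corollary is a purely formal consequence of the theorem. The only points requiring a moment's care are the bookkeeping of the small cases $d \le 4$ and the observation that the prime $p$ produced by part~(c) satisfies $2 \le p \le d-1$, so that $k^{[p]}$ really is one of the fields whose compositum is $k^{(d)}$.
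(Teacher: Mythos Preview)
Your argument is correct and is exactly what the paper intends: the corollary is stated there as immediate from Theorem~\ref{thm1}, and you have spelled out precisely that deduction. The only quibble is terminological---what you call the ``forward'' and ``reverse'' implications are swapped relative to the usual reading of ``$A$ if and only if $B$''---but the mathematics is unaffected.
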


We now introduce the notion of boundedness for an extension of fields.  We will use this language to state our remaining results.

\begin{definition}\label{boundeddef}
We say an infinite extension $M$ of $k$ is \emph{bounded over $k$} (or that $M/k$ is \emph{bounded}) if there exists a constant $c$ such that all finite subextensions of $M/k$ can be generated by elements of degree less than or equal to $c$.  
If there is no such $c$, we say that $M/k$ is \emph{unbounded}.  

If all finite Galois subextensions of $M/k$ can be generated by elements of degree less than or equal to $c$, we say $M/k$ is \emph{Galois bounded}; otherwise we say $M/k$ is \emph{Galois unbounded}.
\end{definition}
It was first shown by Checcoli that, for a number field $k$, the extension $k^{(d)}/k$ is not in general Galois bounded (see \cite{checcolithesis}, Theorem 2, part ii).  We will address the question of how boundedness and Galois boundedness depend on $d$ for the fields $k^{(d)}$ and $k^{[d]}$.  Further restricting attention to abelian Galois extensions greatly simplifies the discussion.  It is easily seen that $k^{(d)}_{\textup{ab}}$ is bounded over $k$ for all $d$, where the subscript denotes the maximal abelian subextension (cf. \cite{checcolithesis} Theorem 1.4).

In the case where $k$ is a number field,  Bombieri and Zannier ask in \cite{bombierizannier} whether, for any given constant $T$, only finitely many points in $k^{(d)}$ have absolute Weil height (see \cite{bombierigubler}, p. 16 for a definition) at most $T$.  Such a finiteness property is called the \emph{Northcott property}.  This problem has been further discussed in \cite{widmerN} and \cite{checcoliwidmer}, but remains open.  In Theorem 1 of \cite{bombierizannier} it is proved that this property is enjoyed by $k^{(d)}_{\textup{ab}}$, and the boundedness of $k^{(d)}_{\textup{ab}}/k$ plays a role in the proof.  The authors of the present work are hopeful that understanding the boundedness properties in $k^{[d]}$ and $k^{(d)}$ will be useful in understanding such problems.

The following theorems summarize our results on boundedness and Galois boundedness.

\begin{theorem}\label{bdd}
If $k$ is a number field, then $k^{[d]}$ is bounded over $k$ if and only if $d \leq 2$.
\end{theorem}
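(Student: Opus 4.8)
The plan is to recast ``generated by elements of degree $\le c$'' group-theoretically, settle $d\le 2$ at once, and for every $d\ge 3$ produce, given any $c$, a finite subextension of $k^{[d]}$ that cannot be so generated. The reformulation is the following: if $\tilde K/k$ is finite Galois with group $G$ and $U\le G$, then $\tilde K^{U}$ is generated over $k$ by elements of degree $\le c$ exactly when $U$ is an intersection of subgroups of $G$ of index $\le c$ --- a generating set $\beta_1,\dots,\beta_r$ of $\tilde K^{U}/k$ of degrees $\le c$ corresponds to the subgroups $H_i=\Gal(\tilde K/k(\beta_i))$, which satisfy $[G:H_i]=[k(\beta_i):k]\le c$ and $\bigcap_i H_i=U$, and conversely one recovers such generators from primitive elements of the fixed fields $\tilde K^{H_i}$. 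For $d=1$ there is nothing to prove. For $d=2$, every finite subextension $K$ of $k^{[2]}$ is Galois over $k$ with elementary abelian $2$-group, so Kummer theory gives $K=k(\sqrt{a_1},\dots,\sqrt{a_n})$ for suitable $a_i\in k^{\times}$; hence $c=2$ works and $k^{[2]}/k$ is bounded.

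Now fix $d\ge 3$ and a target $c$. I would choose a transitive solvable subgroup $H\le S_d$ carrying a nontrivial elementary abelian minimal normal subgroup $A\cong\FF_p^{\,m}$ on which $H$ acts nontrivially: if an odd prime $p$ divides $d$, take $H=D_d$ acting on the vertices of a regular $d$-gon and $A$ its subgroup of order $p$ (the reflections invert $A$); if $d=2^{e}$ with $e\ge 2$, take $H=\AGL_1(\FF_{2^{e}})=\FF_{2^{e}}\rtimes\FF_{2^{e}}^{\times}$ acting on the affine line, with $A=\FF_{2^{e}}$ and $\FF_{2^{e}}^{\times}$ acting by multiplication. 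Every $d\ge 3$ falls under one of these cases (for $d=4$ one may alternatively invoke $k^{[3]}\subseteq k^{[4]}$ from Theorem~\ref{thm1}(b)). Since $k$ is a number field, $H$ is realizable over $k$ as the Galois group of the splitting field of an irreducible degree-$d$ polynomial, and $k$ being Hilbertian one can find $H$-extensions $E_1,\dots,E_N$ of $k$ with $\Gal(E_1\cdots E_N/k)\cong H^{N}$. Each $E_i$, being the Galois closure of a degree-$d$ extension, lies in $k^{[d]}$, so $\tilde K:=E_1\cdots E_N\subseteq k^{[d]}$; inside $A^{N}\le H^{N}$ the $N$ copies $A^{(1)},\dots,A^{(N)}$ of $A$ are pairwise non-isomorphic irreducible $H^{N}$-modules (here one uses that $H$ acts \emph{nontrivially} on $A$). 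Fixing a nonzero coordinate functional $\pi$ on $A$, set $U:=\{\,x\in A^{N}:\sum_{i=1}^{N}\pi(x^{(i)})=0\,\}$ (a subgroup of index $p$ in $A^{N}$) and $K:=\tilde K^{U}$, a finite subextension of $k^{[d]}$.

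The heart of the matter is the claim that if $N>m\log_{|A|}(c!)$ then $U$ is not an intersection of subgroups of $H^{N}$ of index $\le c$, so that $K$ is not generated over $k$ by elements of degree $\le c$. To see this, if $W\le H^{N}$ has index $\le c$ then $\mathrm{core}(W)$ has index $\le c!$, so the $H^{N}$-submodule $\mathrm{core}(W)\cap A^{N}$ of $A^{N}=\bigoplus_i A^{(i)}$ is a coordinate subsum $A^{S}$ (as the $A^{(i)}$ are pairwise non-isomorphic irreducibles), and $|A|^{\,|\oneto{N}\setminus S|}=[A^{N}:A^{S}]\le c!$ forces $|\oneto{N}\setminus S|\le\log_{|A|}(c!)$; hence $W\cap A^{N}\supseteq A^{S}$, so $W\cap A^{N}$ is cut out in $\FF_p^{\,mN}$ by linear forms supported on the $\le m\log_{|A|}(c!)$ coordinates lying in the blocks outside $S$, i.e. $(W\cap A^{N})^{\perp}$ is spanned by vectors of weight $\le L:=m\log_{|A|}(c!)$. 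If $U=\bigcap_{j=1}^{t}W_j$ with all $[H^{N}:W_j]\le c$, then $U\subseteq A^{N}$ gives $U=\bigcap_j(W_j\cap A^{N})$, so $U^{\perp}=\sum_j(W_j\cap A^{N})^{\perp}$; since $U^{\perp}$ is one-dimensional and each summand lies in $U^{\perp}$, some summand equals $U^{\perp}$, so $U^{\perp}$ contains a nonzero vector of weight $\le L<N$ --- impossible, as the nonzero vectors of $U^{\perp}$ have weight $N$. Letting $c\to\infty$ shows $k^{[d]}/k$ is unbounded. The group theory here is routine; the step I expect to be most delicate is the realizability input --- producing, over the number field $k$, finitely many linearly disjoint degree-$d$ fields whose splitting fields carry $H$ as a transitive permutation group of degree $d$ --- and it is precisely there (equivalently, in the solvability of the relevant embedding problems over $k$, cf.\ the footnote) that the number-field hypothesis is used.
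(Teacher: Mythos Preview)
Your proof is correct, but the route is genuinely different from the paper's.

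The paper handles $d\ge 3$ by a case split that is superficially similar to yours but uses different groups and a different obstruction. For $d$ with an odd prime divisor $p$ it invokes Lemma~\ref{dpcp}: inside $G=D_p^{\,n-1}\times C_p$ one takes the ``diagonal'' subgroup $H=\langle r_1r_n,\dots,r_{n-1}r_n\rangle$ and shows by a direct element-by-element argument that every subgroup strictly containing $H$ already contains $r_n$; hence the fixed field of $H$ cannot be generated by anything of degree below $p\cdot 2^{\,n-1}$. For $d$ a power of $2$ the paper does not build a new example at all but appeals to Theorem~\ref{squareeven}(a): $k^{[4]}/k$ is \emph{Galois} unbounded via the extraspecial $2$-groups $E_{2,n}$, and unboundedness follows a fortiori. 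In both cases the realization input is Shafarevich's theorem for solvable groups.

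Your argument is more uniform and more structural. You fix one transitive solvable $H\le S_d$ (dihedral or affine), pass to $H^N$, and exploit the $\FF_p[H^N]$-module structure of $A^N$: the key observation that the $A^{(i)}$ are pairwise non-isomorphic irreducibles forces $\mathrm{core}(W)\cap A^N$ to be a coordinate subsum, after which a clean weight/support argument on the dual space finishes. This is conceptually cleaner than the paper's hands-on Lemma~\ref{dpcp}, and it treats the $2$-power case on the same footing as the others. What you lose is the extra strength the paper's even-case argument yields for free: since your obstruction subgroup $U$ is visibly non-normal in $H^N$, your construction says nothing about Galois unboundedness, whereas the paper's use of $E_{2,n}$ simultaneously establishes Theorem~\ref{squareeven}(a).

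One small remark on the realization step, which you rightly flag as the delicate point: Hilbertianity alone does not produce $N$ linearly disjoint $H$-extensions from a single realization of $H$; you need either a regular realization of $H$ over $k(t)$ (which is indeed available for $D_d$ and $\AGL_1(\FF_{2^e})$) together with Hilbert irreducibility, or---as the paper does---simply apply Shafarevich's theorem directly to the solvable group $H^N$. Either route is fine, but the sentence as written elides this.
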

\begin{theorem}\label{galbdd}
If $k$ is any field and $p$ is a prime, then $k^{[p]}$ is Galois bounded over $k$.  More precisely, all finite subextensions of $k^{[p]}/k$ can be generated by elements of degree at most $p$ over $k$.
\end{theorem}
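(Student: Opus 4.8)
The plan is to recast the assertion as a closure property of a class of finite groups, using Galois closures, and then prove that property.

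\emph{Reduction to group theory.} Let $K/k$ be a finite Galois subextension of $k^{[p]}/k$. Then $K$ lies in a compositum $L_1\cdots L_n$ of finitely many extensions $L_i/k$ with $[L_i:k]=p$ (and we may discard any inseparable $L_i$, which are irrelevant since $K/k$ is separable), hence in the finite Galois extension $N$ that is the compositum of the Galois closures of the $L_i$. Set $G=\Gal(N/k)$ and $M=\Gal(N/K)\trianglelefteq G$. For each $i$ the action of $G$ on the $p$ roots of a minimal polynomial of a primitive element of $L_i$ gives a homomorphism $G\to S_p$, and the product of these $n$ homomorphisms is injective, so $G$ embeds in $(S_p)^n$. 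Writing $R_p(H)$ for the intersection of all subgroups of $H$ of index at most $p$, a finite group embeds in a finite direct power of $S_p$ precisely when $R_p$ of it is trivial, so $R_p(G)=1$. By the Galois correspondence the intermediate fields $F$ with $k\subseteq F\subseteq K$ and $[F:k]\le p$ are the fixed fields of the subgroups of $G$ that contain $M$ and have index at most $p$, and every element of such an $F$ has degree at most $p$ over $k$; thus $K$ is generated over $k$ by elements of degree at most $p$ — indeed is the compositum of its subextensions of degree at most $p$ — if and only if those subgroups intersect in $M$, i.e. if and only if $R_p(G/M)=1$. So the theorem is equivalent to: \emph{for $p$ prime, the class of finite groups $G$ with $R_p(G)=1$ is closed under homomorphic images}; equivalently, every subdirectly irreducible group in the variety generated by $S_p$ embeds in $S_p$.

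\emph{Proof of the group-theoretic lemma.} I would induct on $|G|$, reducing at once to the case where $M=V$ is a minimal normal subgroup of $G$: given a general $M\ne 1$, choose a minimal normal $V\le M$; granting $R_p(G/V)=1$, the inductive hypothesis applied to the smaller group $G/V$ and its normal subgroup $M/V$ finishes it. Now fix a faithful $G$-action on a finite set $\Omega$ all of whose orbits have size at most $p$ — such exists exactly because $R_p(G)=1$. Since $p$ is prime, on an orbit of size $p$ the $G$-action is primitive, so $V$ acts there either trivially or transitively; after a block decomposition one gets the same dichotomy on the remaining orbits. Keep the orbits on which $V$ acts trivially (passing to block quotients where needed) and discard those on which $V$ is transitive: then $G/V$ acts on the resulting set with orbits of size at most $p$. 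If that action is faithful we are done. If not, its kernel is $\tilde K/V$ where $\tilde K\trianglelefteq G$ is the pointwise stabilizer of the retained orbits, with $V\subsetneq\tilde K\subsetneq G$; then $\tilde K$ satisfies the hypothesis of the lemma on strictly fewer orbits, so $R_p(\tilde K/V)=1$ by induction, while $R_p(G/\tilde K)=1$ since $G/\tilde K$ acts faithfully with small orbits.

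\emph{The main obstacle.} What remains is to pass from $R_p(\tilde K/V)=1$ and $R_p(G/\tilde K)=1$ to $R_p(G/V)=1$, i.e. to control $R_p$ of the extension $1\to\tilde K/V\to G/V\to G/\tilde K\to 1$; this step fails for arbitrary finite groups and is where the primality of $p$ and the structure of $S_p$ must be exploited. I expect to use that a subgroup of a power of $S_p$ has all of its components (quasisimple subnormal subgroups) embedded diagonally with support invariant under conjugation — hence normal — which pins down the generalized Fitting subgroup of $G/V$, and, for the solvable part, that every group in the variety generated by $S_p$ has exponent dividing the exponent of $S_p$; Burnside's classification of transitive permutation groups of prime degree is also available. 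For $p\le 3$ this is all elementary: $S_2$ and $S_3$ are metabelian, so the variety in question consists of metabelian groups, and one checks directly, via Schur--Zassenhaus, that each of its subdirectly irreducible members embeds in $S_p$, which is exactly what is needed.
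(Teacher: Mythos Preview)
Your reduction to group theory matches the paper's, but the argument is incomplete at exactly the point you label ``the main obstacle'': from $R_p(\tilde K/V)=1$ and $R_p(G/\tilde K)=1$ you need $R_p(G/V)=1$, yet $R_p$ does not behave well in extensions, and the hints you list (generalized Fitting subgroup, exponent bounds, Burnside) are never turned into an actual argument. Handling $p\le 3$ via the metabelian structure of $S_p$ does not touch the general case. There is also some looseness earlier: ``passing to block quotients where needed'' on orbits of size less than $p$ is vague, and you should verify that your retained set is nonempty so that $\tilde K\ne G$.

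The paper avoids the extension problem entirely by describing $G/N$ concretely. Using that each transitive group $G_i$ of prime degree $p$ has the form $T_i\rtimes B_i$ with $T_i$ simple and $B_i\le C_{p-1}$, an induction on the number of factors reduces to the situation where every $T_i$ lies inside $G$; then $T=\prod_i T_i\le G$ and $G/T$ is abelian of exponent dividing $p-1$. For the given normal $N$ one checks that $TN/N$ is an elementary abelian $p$-group (each non-abelian $T_i$ satisfies $T_i=[T_i,N]\le N$, so only the $C_p$ factors survive in the quotient), and since $|G/TN|$ is coprime to $p$, Schur--Zassenhaus gives $G/N\cong V\rtimes B$ with $V$ an $\FF_p$-vector space and $B$ abelian of exponent dividing $p-1$. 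Because $x^{p-1}-1$ splits over $\FF_p$, Maschke's theorem decomposes $V$ into one-dimensional $B$-submodules, and the corresponding codimension-one complements $V_i\rtimes B$ are index-$p$ subgroups of $G/N$ whose intersection meets $V$ trivially. This explicit structure of $G/N$ is the ingredient your outline is missing.
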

We will also establish the following partial converse to Theorem \ref{galbdd}.
\begin{theorem}\label{squareeven}
If $k$ is a number field or global function field and $d > 2$, then $k^{[d]}/k$ is Galois unbounded in the following cases:
\begin{enumerate}
\item[(a)] $d$ is divisible by a square;
\item[(b)] $d$ is divisible by two primes $p$ and $q$ such that $q  \equiv 1 \imod{p}$.  
\end{enumerate}
In particular, this includes the case where $d$ is even and greater than 2.
\end{theorem}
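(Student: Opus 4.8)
The plan is to produce, for every constant $c$, a finite Galois subextension $K/k$ of $k^{[d]}/k$ that is not generated over $k$ by elements of degree $\le c$; concretely, I would realize the extraspecial $p$-groups of order $p^{1+2m}$ (for a well-chosen prime $p$ and $m\to\infty$) as Galois groups of subextensions of $k^{[d]}$. Two preliminaries set this up. First, for a finite Galois $K/k$ with group $G$, one has $K=k(\alpha_1,\dots,\alpha_r)$ with all $[k(\alpha_i):k]\le c$ if and only if the subgroups of $G$ of index $\le c$ intersect trivially; write $b(G)$ for the least such $c$ (when it exists), so the task becomes: realize, in $k^{[d]}$, finite Galois extensions whose groups have $b$-value tending to infinity. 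Second — the realization principle I would use — if $T\le S_e$ is a transitive group with $e\mid d$ that is realizable over $k$ as the Galois group (on its roots) of a degree-$e$ separable polynomial, and if a finite group $G$ is a quotient of $T^m$, then $G$ occurs as $\Gal(K/k)$ for some finite Galois $K\subseteq k^{[e]}\subseteq k^{[d]}$ (the inclusion by the remark in the introduction, since $e\mid d$): one realizes $T$ by $m$ pairwise linearly disjoint splitting fields (possible because $k$ is Hilbertian), forms their compositum — Galois over $k$ with group $T^m$, and contained in $k^{[e]}$ since generated by roots of degree-$e$ polynomials — and takes the fixed field of the kernel of $T^m\twoheadrightarrow G$. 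For $k$ a number field every solvable $T$ is so realizable by Shafarevich's theorem; for a global function field one substitutes the special cases of solvability of embedding problems alluded to in the introduction.

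With this in hand, the next step is the group theory of the extraspecial groups. Fixing a prime $p$, I would let $H$ be the extraspecial group of order $p^3$ of exponent $p$ (with $H=D_8$ when $p=2$) and $E_m$ the central product of $m$ copies of $H$, an extraspecial group of order $p^{1+2m}$; note that $E_m$ is the quotient of $H^m$ by the central — hence normal — subgroup $\{(z_1,\dots,z_m)\in Z(H)^m:z_1\cdots z_m=1\}$. The facts to verify are: (i) $E_m$ has a unique minimal normal subgroup, namely $Z(E_m)\cong\ZZ/p$ — if $N\trianglelefteq E_m$ is nontrivial with $N\cap Z(E_m)=1$ then $[E_m,N]\le N\cap[E_m,E_m]=N\cap Z(E_m)=1$, so $N\le Z(E_m)$, a contradiction; (ii) a core-free $U\le E_m$ satisfies $U\cap Z(E_m)=1$, hence is abelian (its commutators lie in $[E_m,E_m]\cap U=Z(E_m)\cap U=1$) and has image in $E_m/Z(E_m)\cong(\ZZ/p)^{2m}$ totally isotropic for the nondegenerate symplectic commutator form, so $|U|\le p^m$ and $[E_m:U]\ge p^{m+1}$. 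From (i) and (ii): every faithful permutation representation of $E_m$ has degree $\ge p^{m+1}$ (decompose into transitive constituents; faithfulness forces some constituent's point-stabilizer to be core-free, by uniqueness of the minimal normal subgroup), and again by (i), $b(E_m)\le c$ would make some coordinate projection $E_m\to S_n$ with $n\le c$ injective; hence $b(E_m)\ge p^{m+1}\to\infty$.

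It then remains, in each case, to exhibit a transitive $T$ as in the realization principle with $T\twoheadrightarrow H$ (whence $T^m\twoheadrightarrow H^m\twoheadrightarrow E_m$). In case (a), I would fix a prime $p$ with $p^2\mid d$; here $H=H_p\cong\FF_p^2\rtimes(\ZZ/p)$ — with $\FF_p^2$ acting by translations and $\ZZ/p$ as a single Jordan block — acts faithfully and transitively on $p^2$ points with the top $\ZZ/p$ as a core-free point stabilizer, so $T:=H\le S_{p^2}$ with $e=p^2\mid d$ works. In case (b), I would fix primes $p,q\mid d$ with $q\equiv1\pmod p$, so the Frobenius group $F=\ZZ/q\rtimes\ZZ/p$ exists: it acts faithfully transitively on $q$ points, the wreath product $F\wr(\ZZ/p)$ acts faithfully transitively on $pq$ points, the $q$-core $(\ZZ/q)^p$ is normal in $F\wr(\ZZ/p)$ with quotient $(\ZZ/p)\wr(\ZZ/p)$, and $(\ZZ/p)\wr(\ZZ/p)$ — whose base group, as an $\FF_p[\ZZ/p]$-module, is $\FF_p[x]/(x-1)^p$ — surjects onto $\FF_p^2\rtimes(\ZZ/p)=H_p$ by factoring out the submodule $(x-1)^2\,\FF_p[x]/(x-1)^p$. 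Thus $T:=F\wr(\ZZ/p)\le S_{pq}$ with $e=pq\mid d$ (using $p\ne q$) works. In both cases $T$ is solvable, so the realization principle yields finite Galois extensions of $k$ inside $k^{[d]}$ with group $E_m$ for all $m$, and $b(E_m)\to\infty$ proves the theorem; the assertion about even $d>2$ follows since such a $d$ is a power of $2$ (case (a)) or is divisible by $2$ and by some odd prime $q$, with $q\equiv1\pmod 2$ (case (b)).

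The step I expect to be the main obstacle is the realization in case (b): unlike in case (a), where $H_p$ sits visibly as a transitive subgroup of $S_d$, here $H_p$ is recovered only after passing to the wreath product $F\wr(\ZZ/p)$ and exploiting the non-semisimplicity of $\FF_p[\ZZ/p]$ — a single Frobenius group cannot supply it, which is precisely why the hypothesis $q\equiv1\pmod p$ (rather than merely $q\mid d$) enters. Secondary points needing care are that the subgroups one factors out are indeed normal (immediate, being central or being the $q$-core inside a normal base group), that the linearly disjoint realizations required by the Hilbertian step can be arranged with the prescribed transitive actions, and that for $p=2$ one must take $H=D_8$ rather than $Q_8$ so that $H$ embeds as a transitive subgroup of $S_4$ in case (a).
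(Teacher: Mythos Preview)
Your proof is correct and follows the same overall framework as the paper: exhibit, for each $m$, a finite Galois subextension of $k^{[d]}/k$ with group the extraspecial $p$-group $E_m$ of order $p^{1+2m}$, by finding a solvable transitive group $T$ of degree $e\mid d$ surjecting onto $H_p$ and then invoking Shafarevich to realize $T^m$ (the paper packages precisely this reduction as Lemma~\ref{heisenberger}). Case~(a) is handled identically in both arguments, via the natural degree-$p^2$ action of $H_p$ itself.

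The genuine difference is in case~(b). The paper constructs by hand a transitive subgroup $C_q^p\rtimes H_p$ of $S_{pq}$ (Lemma~\ref{hberger}), verifying the Heisenberg structure of the top part via an explicit matrix isomorphism and an action on affine ``planes''. You instead take $T=(C_q\rtimes C_p)\wr C_p$, which is transitive of degree $pq$ automatically, kill the normal $q$-core to reach $C_p\wr C_p$, and then read off the quotient $H_p$ from the $\FF_p[C_p]$-module structure $\FF_p[x]/(x-1)^p$ of the base group. Your route is more conceptual and sidesteps the explicit coordinate verification; the paper's route has the minor advantage of producing a smaller witness (order $p^3q^p$ versus your $p^{p+1}q^p$ for $p\ge 3$). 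Your bound $b(E_m)\ge p^{m+1}$, obtained via the totally-isotropic argument for core-free subgroups, is one power of $p$ sharper than the paper's Lemma~\ref{hh}, though either suffices. (One notational collision: the paper writes $D_d$ for the dihedral group of order $2d$, so your $D_8$ is the paper's $D_4$.)
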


In terms of the fields $k^{(d)}$, Theorems \ref{bdd}, \ref{galbdd}, and \ref{squareeven} immediately imply the following.
\begin{corollary}\label{paren}
Let $k$ be a number field.  Then 
\begin{enumerate}
\item[(a)] $k^{(2)}/k$ is bounded, 
\item[(b)] $k^{(3)}/k$ is Galois bounded but not bounded, and 
\item[(c)] $k^{(d)}/k$ is Galois unbounded for $d \geq 4$.
\end{enumerate}
\end{corollary}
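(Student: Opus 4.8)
The plan is to deduce all three parts directly from the identifications $k^{(2)} = k^{[2]}$ and $k^{(3)} = k^{[3]}$ together with the already-established Theorems \ref{bdd}, \ref{galbdd}, and \ref{squareeven}; the argument is essentially formal, so the only real content is a bookkeeping observation about how (Galois) (un)boundedness behaves under enlarging the top field.

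First I would record the elementary \emph{monotonicity principle}: if $k \subseteq M \subseteq M'$ and $M/k$ is (Galois) unbounded, then $M'/k$ is (Galois) unbounded. Indeed, every finite (Galois) subextension $K/k$ of $M/k$ is also a finite (Galois) subextension of $M'/k$, so any constant $c$ witnessing (Galois) boundedness of $M'/k$ would also witness it for $M/k$. Dually, boundedness passes down to subextensions. I would also note that boundedness of $M/k$ implies Galois boundedness of $M/k$, so that Galois unboundedness implies unboundedness.

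Now I would dispatch the three parts. For (a): by the definition \eqref{kpad} we have $k^{(2)} = k^{[2]}$, and Theorem \ref{bdd} with $d = 2$ says $k^{[2]}/k$ is bounded. For (b): Theorem \ref{thm1}(a) gives $k^{[2]} \subseteq k^{[3]}$, hence $k^{(3)} = k^{[2]}k^{[3]} = k^{[3]}$ (equivalently, invoke Corollary \ref{kdkd} since $3 < 5$); then Theorem \ref{galbdd} applied with $p = 3$ shows $k^{[3]}/k$ is Galois bounded (with bound $3$), while Theorem \ref{bdd} with $d = 3 > 2$ shows $k^{[3]}/k$ is not bounded. For (c): for every $d \geq 4$ the definition \eqref{kpad} gives $k^{[4]} \subseteq k^{(d)}$; since $4 = 2^2$ is divisible by a square, Theorem \ref{squareeven}(a) shows $k^{[4]}/k$ is Galois unbounded, and the monotonicity principle upgrades this to Galois unboundedness of $k^{(d)}/k$.

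I do not anticipate any genuine obstacle here: once the monotonicity observation is in place and the identifications $k^{(2)} = k^{[2]}$, $k^{(3)} = k^{[3]}$ are noted, the corollary is a one-line consequence of the cited theorems. If anything requires care, it is merely stating the direction of monotonicity correctly — unboundedness propagates upward and boundedness downward, relative to a fixed base field $k$ — since this is easy to reverse by accident.
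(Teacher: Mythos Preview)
Your proposal is correct and matches the paper's approach: the paper simply asserts that the corollary follows immediately from Theorems \ref{bdd}, \ref{galbdd}, and \ref{squareeven}, and your argument is exactly the unpacking of that assertion via the identifications $k^{(2)}=k^{[2]}$, $k^{(3)}=k^{[3]}$ (from Corollary \ref{kdkd}) together with the obvious monotonicity of Galois unboundedness under enlarging the top field.
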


This paper is organized as follows.  Sections 2 and 3 are devoted to preliminaries and background material on group theory and Galois theory.  In Section 4 we prove Theorem \ref{thm1}; parts (a) and (b) appeal to existing results on embedding problems, while part (c) follows by a purely group theoretic argument.  We conclude Section 4 with an elementary construction which gives part (a) in the case where $k = \QQ$.  In Section 5 we prove Theorems \ref{bdd} and \ref{squareeven} using explicit constructions.  Finally, in Section 6 we prove Theorem \ref{galbdd} as an immediate corollary of a purely group theoretic statement (see Proposition \ref{bigboy}).

 \section*{Acknowledgments}
 The authors would like to thank Daniel Allcock, Sara Checcoli, Joseph Gunther, Andrea Lucchini, Jeffrey Vaaler, and anonymous referees for numerous useful communications.  We would also like to express our appreciation to the \verb|GAP| group.  Although we did not use any computer calculations directly for any of the results in this paper, we used the \verb|GAP| software package extensively to improve our understanding of the group theoretic aspects of these problems.


\section{Preliminaries on group theory}\label{groupsec}

We recall some standard definitions.  A \emph{transitive group} of degree $d$ will mean a finite permutation group acting faithfully and transitively on a set $\Omega$ of size $d$, such as the Galois group of an irreducible degree $d$ polynomial acting on the roots.  A transtive group is \emph{primitive} if there is no nontrivial partition of $\Omega$ such that the group has an induced action on the blocks of the partition.  Since all such blocks must be equal in size, this implies that any transitive group of prime degree is primitive.  For more background on transitive and primitive groups, see \cite{dixonmortimer} or \cite{wielandt}. 

Let us fix some notation for finite groups.  We will denote by $C_d$, $D_d$, $A_d$, and $S_d$ the cyclic, dihedral, alternating, and symmetric groups of degree $d$, respectively.  Note that $D_d$ has order $2d$.  We denote the Klein 4-group by $V$.

A \emph{subdirect product} $G$ of some collection of groups $\{G_i\}_i$ is a subgroup of the direct product $\prod_iG_i$ with the property that the projection map from $G$ to each factor $G_i$ is surjective.

 Let $H_1,H_2$ and $Q$ be groups, and let $\alpha_1 : H_1 \to Q$ and $\alpha_2 : H_2 \to Q$ be surjective group homomorphisms.  The \emph{fibered product} of $H_1$ with $H_2$ over $Q$ (with respect to the maps $\alpha_1$ and $\alpha_2$) is defined to be the subgroup $H_1\times_QH_2$ of the direct product $H_1\times H_2$ given by
$$H_1\times_QH_2 = \{ (h_1,h_2) \in H_1\times H_2 \hspace{1pt} ~\big|~ \hspace{1pt} \alpha_1(h_1) = \alpha_2(h_2) \}.$$  Notice that we have
\begin{equation}\label{fibord}
 |H_1 \times_Q H_2| = \frac{|H_1| \cdot |H_2|}{|Q|}.
\end{equation}

The following lemma can be found in different forms in many texts, and is variously attributed to Goursat or Goursat and Lambek.  A short proof can be found in \cite{butlermckay}, p. 864.

\begin{lemma}[Goursat's Lemma]\label{gl} Let $H_1$ and $H_2$ be groups. The set of subdirect products of $H_1\times H_2$ is equal to the set of fibered products $H_1\times_Q H_2$. In particular, every subdirect product of $H_1\times H_2$ is of the form $H_1\times_QH_2$.
\end{lemma}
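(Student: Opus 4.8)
The plan is to prove the two set inclusions separately; the interesting direction is that every subdirect product arises as a fibered product. One inclusion is immediate: if $\alpha_1\colon H_1\to Q$ and $\alpha_2\colon H_2\to Q$ are surjective homomorphisms, then given $h_1\in H_1$ we may use surjectivity of $\alpha_2$ to choose $h_2\in H_2$ with $\alpha_2(h_2)=\alpha_1(h_1)$, so that $(h_1,h_2)\in H_1\times_Q H_2$. Hence the first projection $H_1\times_Q H_2\to H_1$ is onto, and symmetrically so is the second, so $H_1\times_Q H_2$ is a subdirect product of $H_1\times H_2$.

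For the reverse inclusion, let $G\le H_1\times H_2$ be a subdirect product, and set
\[
N_1=\{h\in H_1 : (h,e)\in G\},\qquad N_2=\{h\in H_2 : (e,h)\in G\},
\]
the images in $H_1$ and $H_2$ of the kernels of the two coordinate projections restricted to $G$. First I would check that $N_i\trianglelefteq H_i$: closure under the group operations is clear, and given $g_1\in H_1$ we may, since $G$ projects onto $H_1$, pick $(g_1,g_2)\in G$ and conjugate $(h,e)\in G$ by it to obtain $(g_1hg_1^{-1},e)\in G$, i.e. $g_1hg_1^{-1}\in N_1$; the argument for $N_2$ is symmetric. Also $N_1\times N_2\subseteq G$, since $(h,h')=(h,e)(e,h')$ for $h\in N_1$, $h'\in N_2$.

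The key step is to identify the quotient $G/(N_1\times N_2)$. Consider the composite surjection $G\to H_1\to H_1/N_1$ given by $(h_1,h_2)\mapsto h_1N_1$; its kernel is $\{(h_1,h_2)\in G : h_1\in N_1\}$, and I claim this equals $N_1\times N_2$, because if $h_1\in N_1$ then $(h_1,e)\in G$, whence $(e,h_2)=(h_1,e)^{-1}(h_1,h_2)\in G$ and so $h_2\in N_2$. Therefore $G/(N_1\times N_2)$ projects isomorphically onto $H_1/N_1$, and symmetrically onto $H_2/N_2$. Writing $Q$ for this common quotient and $\alpha_i\colon H_i\to H_i/N_i\cong Q$ for the induced surjections, one unwinds the definitions to see that $(h_1,h_2)\in G$ if and only if $\alpha_1(h_1)=\alpha_2(h_2)$; that is, $G=H_1\times_Q H_2$ with respect to these maps.

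I do not anticipate a genuine obstacle, as this is the classical argument (and \cite{butlermckay} is cited for a short writeup). The one point deserving care is verifying that the two maps $G\to H_i/N_i$ share the common kernel $N_1\times N_2$, since this is precisely what makes the isomorphism $H_1/N_1\cong H_2/N_2$ — and hence the choice of $Q$ together with the maps $\alpha_1,\alpha_2$ — well defined; everything else is bookkeeping. As a consistency check the order formula \eqref{fibord} drops out, since $|G|=|N_1|\,|N_2|\,|Q|=|H_1|\,|H_2|/|Q|$.
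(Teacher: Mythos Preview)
Your argument is correct and is exactly the classical proof of Goursat's Lemma; the paper itself does not give a proof but simply cites \cite{butlermckay} for one, and what you have written is precisely that short argument. There is nothing to add.
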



\section{Galois theory and embedding problems}\label{embeddingsec}
The following elementary proposition highlights the role of Galois theory in the proofs of our results.
\begin{proposition}\label{rirw}
Let $k$ be a perfect field and let $L/k$ be a finite Galois extension of fields.  The following are equivalent:
\begin{enumerate}
 \item[(a)] $L$ is generated by elements of degree $d$ over $k$;
 \item[(b)] in $\Gal(L/k)$ the trivial group is the intersection of subgroups of index $d$;
 \item[(c)] $\Gal(L/k)$ is a subdirect product of transitive groups of degree $d$.
\end{enumerate}
\end{proposition}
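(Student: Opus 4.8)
The plan is to prove the chain of equivalences $(a) \Leftrightarrow (b) \Leftrightarrow (c)$ by translating between the field-theoretic language and the group-theoretic language via the Galois correspondence. Throughout, write $G = \Gal(L/k)$. Since $k$ is perfect, every finite extension is separable, so for any subextension $k \subseteq F \subseteq L$ we have $[F:k] = [G : \Gal(L/F)]$, and for any $\beta \in L$ the degree $[k(\beta):k]$ equals the index of the stabilizer $\Stab_G(\beta)$ in $G$. This dictionary is the only nontrivial input; the rest is bookkeeping.

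For $(a) \Leftrightarrow (b)$: if $L$ is generated by elements $\beta_1, \dots, \beta_n$ each of degree $d$, let $H_i = \Gal(L/k(\beta_i)) \le G$, which has index $[k(\beta_i):k] = d$. The compositum $k(\beta_1)\cdots k(\beta_n)$ corresponds to $\bigcap_i H_i$; this compositum equals $L$ precisely when $\bigcap_i H_i = 1$. Conversely, given subgroups $H_1, \dots, H_n$ of index $d$ with trivial intersection, the fixed fields $L^{H_i}$ are degree-$d$ extensions of $k$; each $L^{H_i}$ is generated by a single element $\beta_i$ of degree $d$ (primitive element theorem, again using that $k$ is perfect), and $\bigcap_i H_i = 1$ forces $k(\beta_1)\cdots k(\beta_n) = L$. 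So $(a)$ and $(b)$ say exactly the same thing.

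For $(b) \Leftrightarrow (c)$: a subdirect product of transitive groups of degree $d$ is, up to isomorphism, a subgroup $G \le \prod_i T_i$ with each coordinate projection $\pi_i : G \to T_i$ surjective and each $T_i$ transitive of degree $d$. Given $(b)$, with subgroups $H_1, \dots, H_n$ of index $d$ and $\bigcap_i H_i = 1$, let $G$ act on the coset space $G/H_i$; this gives a transitive action of degree $d$, hence (after quotienting by the kernel $\mathrm{Core}_G(H_i)$, which is needed for faithfulness) a transitive group $T_i$ of degree $d$, and the map $G \to \prod_i T_i$ has kernel $\bigcap_i \mathrm{Core}_G(H_i) \subseteq \bigcap_i H_i = 1$, so it embeds $G$ as a subdirect product. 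Conversely, if $G \le \prod_i T_i$ is subdirect with each $T_i$ transitive of degree $d$, pick a point stabilizer $S_i \le T_i$ (index $d$) and set $H_i = \pi_i^{-1}(S_i) \le G$; then $[G:H_i] = [T_i : S_i] = d$ by surjectivity of $\pi_i$, and $\bigcap_i H_i = \ker\left(G \to \prod_i T_i/S_i\right)$ is trivial since an element in every $H_i$ has every coordinate in the corresponding $S_i$, and $\bigcap_{g \in T_i} gS_ig^{-1} = 1$ by faithfulness of the action of $T_i$ — wait, more carefully: transitivity gives $\bigcap$ of conjugates of $S_i$ equals $1$, and combined with subdirectness one checks the intersection $\bigcap_i H_i$ is normal in $G$ with trivial image in each $T_i$, hence trivial. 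This gives $(b)$.

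I expect the main obstacle to be purely expository rather than mathematical: being careful about the distinction between a transitive \emph{action} (which a coset action always is) and a transitive \emph{group} (which must act \emph{faithfully}), so that in the $(b) \Rightarrow (c)$ direction one passes to $G/\mathrm{Core}_G(H_i)$ and must verify that the resulting map $G \to \prod_i G/\mathrm{Core}_G(H_i)$ is still injective — this is immediate since $\bigcap_i \mathrm{Core}_G(H_i) \subseteq \bigcap_i H_i = 1$ — and that it is still subdirect, which is clear since each factor map is a composition of surjections. Everything else is a routine application of the Galois correspondence and the orbit–stabilizer correspondence between elements of a field and point stabilizers in the Galois group.
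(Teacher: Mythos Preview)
Your $(a)\Leftrightarrow(b)$ argument matches the paper's, and your $(b)\Rightarrow(c)$ via the cores of the $H_i$ is a correct, purely group-theoretic alternative to the paper's route (the paper instead proves $(a)\Rightarrow(c)$ by writing $L$ as a compositum of splitting fields of degree-$d$ polynomials and identifying $\Gal(L/k)$ with the subdirect product of their Galois groups).

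However, your $(c)\Rightarrow(b)$ argument has a genuine gap. You choose \emph{one} point stabilizer $S_i\le T_i$ per factor, set $H_i=\pi_i^{-1}(S_i)$, and then assert that $\bigcap_i H_i$ is normal in $G$ with trivial image in each $T_i$. This is false in general. Take $G=S_3\times S_3$, viewed as the full (hence subdirect) product of two copies of $S_3$ acting on three points, and let $S_1=S_2=\Stab(1)\cong C_2$. Then $H_1=S_1\times S_3$ and $H_2=S_3\times S_2$, so $H_1\cap H_2=S_1\times S_2\cong C_2\times C_2$, which is neither trivial nor normal in $G$, and its projection to each factor is $C_2$, not the identity. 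The fact that the core of $S_i$ in $T_i$ is trivial does not rescue the argument, because you have only pulled back $S_i$ itself, not its conjugates.

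The fix is exactly what the paper does: use \emph{all} point stabilizers rather than one per factor. For each $i$ and each $x\in\Omega_i$ set $H_{i,x}=\pi_i^{-1}\bigl(\Stab_{T_i}(x)\bigr)$; each $H_{i,x}$ has index $d$ in $G$, and
\[
\bigcap_{i,x} H_{i,x}=\{g\in G : \pi_i(g)\text{ fixes every point of }\Omega_i\text{ for all }i\}=1
\]
by faithfulness of each $T_i$-action. With this correction your proof goes through.
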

\begin{proof}
The equivalence (a) and (b) follows immediately from the Galois correspondence and the primitive element theorem.  If (a) is satisfied, then $L$ is a compositum of the splitting fields of some degree $d$ polynomials.  It follows from basic Galois theory that $\Gal(L/k)$ is a subdirect product of these Galois groups, which are transitive groups of degree $d$, so (c) is satisfied.  Suppose (c) is satisfied, so we have $\Gal(L/k)$ acting on a disjoint union of sets of size $d$, transitively on each set.  Then all point-stabilizers have index $d$, and the intersection of these subgroups is trivial, yielding (b).
\end{proof}

In order to establish Theorem \ref{thm1}, we must discuss the embedding problem in Galois theory.  Let $K/k$ be a Galois extension of fields, $G$ a finite group, and $N$ a normal subgroup of $G$ with a short exact sequence 
 \begin{equation}\label{embseq}
 1 \to N \to G \overset{\phi}{\to} \Gal(K/k) \to 1.
\end{equation} 

This data gives us the \emph{embedding problem} $(K/k, G, N)$.  A \emph{solution} to the embedding problem is an extension $L/k$ with $L \supseteq K$ such that $\Gal(L/k) \cong G$ and the natural map $\Gal(L/k) \to \Gal(K/k)$ agrees with $\phi$.  Hence, a solution to the embedding problem is described by the following commutative diagram.

\begin{equation}\label{solution}
\begin{xy}
(22,8)*+{\Gal(L/k)}="f";%
(0,-8)*+{1}="a"; (10,-8)*+{N}="b"; (22,-8)*+{G}="c"; (40,-8)*+{\Gal(K/k)}="d"; (57,-8)*+{1.}="e";%
{\ar "a";"b"}; {\ar "b";"c"}; {\ar^{\phi\hspace{20pt}} "c";"d"}; {\ar "d";"e"};%
{\ar^{\wr} "f";"c"}; {\ar "f";"d"};%
\end{xy}
\end{equation}

For our purposes, all that is important is finding an extension $L/k$ such that $L \supseteq K$ and $\Gal(L/k) \cong G$, and therefore we will not mention the map $\phi$ in what follows.  

A celebrated result in this context is a theorem of Shafarevich, which states that if $k$ any number field or global function field, any solvable group can be realized as the Galois group of some extension of $k$.  Since products of solvable groups are solvable, this allows us to realize a solvable group as the Galois group of infinitely many extensions, whose pairwise intersections are $k$.  A full proof of Shafarevich's Theorem, along with more background on embedding theory, can be found in \cite{cnf}.

The following proposition is a simple yet important observation which is used implicitly throughout the proof of Theorem \ref{thm1}.
\begin{proposition}\label{newprop}
Let $k$ be a field and let $K/k$ be a finite extension.  Then $K \subseteq k^{[d]}$ if and only if the following two conditions are met.
\begin{enumerate}
 \item[(i)] We can find a group $H$ which is a subdirect product of transitive groups of degree $d$ with some normal subgroup $N$ such that there is a short exact sequence 
\begin{equation}\label{ses2} 
1 \to N \to H \to \Gal(K/k) \to 1.
\end{equation} 
\item[(ii)] We can solve the corresponding embedding problem, i.e. find $L \supseteq K$ such that $\Gal(L/k) \cong H$.
\end{enumerate}
\end{proposition}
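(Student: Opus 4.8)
The plan is to read off both implications directly from Proposition~\ref{rirw}, using the fact that, by definition, $k^{[d]}$ is the directed union of the composita of splitting fields of irreducible polynomials of degree $d$ over $k$. Throughout I take $K/k$ to be Galois, as the appearance of $\Gal(K/k)$ in the statement requires; if $K/k$ is merely finite one first replaces $K$ by its Galois closure, which lies in $k^{[d]}$ if and only if $K$ does, since $k^{[d]}/k$ is normal.

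For the ``if'' direction, suppose (i) and (ii) hold, and let $L \supseteq K$ be a solution to the embedding problem, so that $\Gal(L/k) \cong H$. Because $H$ is a subdirect product of transitive groups of degree $d$, the implication (c)$\Rightarrow$(a) of Proposition~\ref{rirw} shows that $L$ is generated over $k$ by elements of degree $d$; hence $L \subseteq k^{[d]}$, and a fortiori $K \subseteq k^{[d]}$.

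For the ``only if'' direction, suppose $K \subseteq k^{[d]}$. Since $K/k$ is finite, there are finitely many elements $\beta_1,\dots,\beta_n$, each of degree $d$ over $k$, with $K \subseteq k(\beta_1,\dots,\beta_n)$. Let $L$ be the Galois closure over $k$ of $k(\beta_1,\dots,\beta_n)$, equivalently the compositum of the splitting fields over $k$ of the minimal polynomials of the $\beta_i$. Then $L/k$ is finite and Galois, $K \subseteq L$, and $L$ is generated over $k$ by the roots of these irreducible polynomials of degree $d$; so by the implication (a)$\Rightarrow$(c) of Proposition~\ref{rirw}, the group $H := \Gal(L/k)$ is a subdirect product of transitive groups of degree $d$. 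The restriction map $H \to \Gal(K/k)$ is surjective, with kernel $N := \Gal(L/K)$, which is normal in $H$ because $K/k$ is Galois; this furnishes the short exact sequence in~(i), and the field $L$ itself witnesses~(ii).

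The argument is essentially bookkeeping layered on top of Proposition~\ref{rirw}, so I do not anticipate any serious obstacle. The one point requiring a little care is that Proposition~\ref{rirw} was stated for perfect $k$, so to invoke it here one should either assume $k$ is perfect or observe that all the generators involved are separable over $k$, which suffices for the number field and global function field applications of interest in this paper.
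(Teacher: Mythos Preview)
Your argument is correct and is essentially the same as the paper's: both directions are deduced directly from Proposition~\ref{rirw}, using a finite Galois $L \supseteq K$ generated by degree~$d$ elements and the natural short exact sequence $1 \to \Gal(L/K) \to \Gal(L/k) \to \Gal(K/k) \to 1$. Your extra remarks---about tacitly assuming $K/k$ Galois (or passing to the Galois closure) and about the perfect-field hypothesis in Proposition~\ref{rirw}---are sensible caveats that the paper leaves implicit.
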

\begin{proof}
If $K \subseteq k^{[d]}$, then $K$ is contained in some finite Galois extension $L/k$ generated by elements of degree $d$.  By Proposition \ref{rirw}, we have that $\Gal(L/k)$ is a subdirect product of transitive groups of degree $d$, and (i) and (ii) are clearly satisfied via the short exact sequence 
\begin{equation}\label{ses1}
1 \to \Gal(L/K) \to \Gal(L/k) \to \Gal(K/k) \to 1.
\end{equation}

Conversely, if (i) and (ii) are satisfied, then we have $K \subseteq L$ as in (ii), and $L \subseteq k^{[d]}$ by (i) and Proposition \ref{rirw}.
\end{proof}

\section{Proof of Theorem \ref{thm1}} \label{pf}
We implicitly apply Proposition \ref{newprop} throughout.  For integers $m<d$, we are interested in whether or not $k^{[m]} \subseteq k^{[d]}$.  Let $K$ be the splitting field of an irreducible polynomial of degree $m$ in $k[x]$.  In the case $m = 2$, we must have that $\Gal(K/k) \cong C_2$, and we use the following result due to O. Neumann (cf. \cite{neumann}, Theorem 2) in order to conclude that $K \subseteq k^{[d]}$.
\begin{proposition}\label{23}
Let $K/k$ be a quadratic extension of number fields and let $d \geq 3$.  Then there is a solution to the embedding problem $(K/k, S_d, A_d)$ arising from 
\begin{equation}\label{sesas}
 1 \to A_d \to S_d \to \Gal(K/k) \to 1.
\end{equation}
In other words, every irreducible quadratic splits in the splitting field of some degree $d$ polynomial (with symmetric Galois group).
\end{proposition}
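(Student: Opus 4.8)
The plan is to reduce the problem to exhibiting one degree $d$ polynomial with symmetric Galois group and prescribed discriminant, and then to construct such a polynomial from a geometric family by means of Hilbert's irreducibility theorem. Write $K = k(\sqrt{\delta})$ with $\delta \in k^\times \setminus (k^\times)^2$. For every $d \geq 3$ the group $A_d$ is the unique subgroup of index $2$ in $S_d$, so whenever $L \supseteq K$ has $\Gal(L/k) \cong S_d$ the subgroup $\Gal(L/K)$, being normal of index $2$, must correspond to $A_d$; conversely, if $f \in k[x]$ is separable of degree $d$ with $\Gal(f/k) \cong S_d$ and splitting field $L$, then $L^{A_d} = k(\sqrt{\operatorname{disc} f})$. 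Hence it is enough to produce such an $f$ with $\operatorname{disc}(f) \in \delta(k^\times)^2$: its splitting field then contains $K$ and has Galois group $S_d$, which (as remarked before the statement of the proposition) is all the embedding problem asks for.

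To build $f$ I would work inside the family $g(x) - t$, where $g(x) = (x-a)^{d-1}(x-b) + C$ for any $a,b,C \in k$ with $a \neq b$. A short computation gives $g'(x) = d(x-a)^{d-2}(x-r)$ with $r = \bigl((d-1)b + a\bigr)/d$, so $g$ has exactly two critical values: $C$, attained at $x=a$ with ramification profile $[d-1,1]$, and $C - e$ with $e = (d-1)^{d-1}(b-a)^d/d^d \in k^\times$, attained at the simple critical point $x=r$. Thus $g \colon \PP^1 \to \PP^1$ is branched over $\{\infty, C, C-e\}$ with local monodromy a $d$-cycle, a $(d-1)$-cycle, and a transposition. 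The group they generate is transitive, and it is primitive because any block containing the point fixed by the $(d-1)$-cycle must be trivial; a primitive group containing a transposition is $S_d$ (Jordan's theorem; see \cite{wielandt} or \cite{dixonmortimer}), so the geometric monodromy group is $S_d$. Since the Galois group of $g(x) - t$ over $k(t)$ contains the geometric monodromy group and is contained in $S_d$, it equals $S_d$ as well, and one computes $\operatorname{disc}_x\bigl(g(x) - t\bigr) = \pm d^d (C-t)^{d-2}(C-e-t)$.

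It remains to specialize $t$ to some $t_0 \in k$ at which the Galois group is still all of $S_d$ and $\operatorname{disc}_x(g(x)-t_0) \in \delta(k^\times)^2$. The decisive feature of the discriminant is that the exponent $d-2$ of the factor $C-t$ has the same parity as $d$, hence also as $d-3$: setting $s = (C-t)^{\lfloor (d-2)/2\rfloor} w$ turns the curve $\delta s^2 = \operatorname{disc}_x(g(x)-t)$ into $w^2 = (\text{unit})\cdot(C-e-t)$ when $d$ is even, and into $w^2 = (\text{unit})\cdot(C-t)(C-e-t)$ when $d$ is odd --- a parabola in the first case and, in the second, a smooth conic with the obvious $k$-rational points $(t,w) = (C,0)$ and $(C-e,0)$. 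In either case this curve is $k$-rational, so it carries infinitely many $k$-points mapping, by a finite map, onto infinitely many $t_0 \in k$ with $\operatorname{disc}_x(g(x)-t_0) \in \delta(k^\times)^2$. By Hilbert's irreducibility theorem (available because $k$ is a number field) the $t_0$ for which the specialized Galois group is a proper subgroup of $S_d$ form a thin set, and the preimage of a thin set under a finite map from a $k$-rational curve is again thin; so we may pick $t_0$ in our family avoiding this thin set and the finitely many values with $\operatorname{disc} = 0$. Then $f = g - t_0$ has the desired properties.

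I expect the crux to be the second step: arranging at once that the generic Galois group of the family equals $S_d$ and that the discriminant is a perfect square times a factor of degree at most $2$. For a generic degree $d$ family the associated discriminant curve has positive genus and may have no $k$-points at all, so the whole argument turns on the special shape $g(x) = (x-a)^{d-1}(x-b) + C$, chosen precisely so that $g$ has only two finite branch points (forcing $S_d$ via Jordan's theorem) and that the ramification over one of them is maximal (producing the large square factor $(C-t)^{d-2}$). Given the family, the monodromy computation and the appeal to Hilbert irreducibility are routine.
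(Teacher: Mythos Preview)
Your overall strategy is sound, and it supplies a genuinely self-contained argument where the paper gives none: the paper simply quotes this proposition as a result of O.~Neumann. The construction of the family $g(x)-t$ with only two finite critical values is exactly the right idea, the monodromy computation forcing $S_d$ is correct, and the discriminant formula $\operatorname{disc}_x(g(x)-t)=\pm d^d(C-t)^{d-2}(C-e-t)$ leads, as you say, to a $k$-rational curve $C$ in the $(t,w)$-plane.

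There is, however, one genuine gap in the final step. You assert that ``the preimage of a thin set under a finite map from a $k$-rational curve is again thin,'' but this is false in general: the squares in $k$ form a thin subset of $\mathbb{A}^1(k)$, yet their preimage under $x\mapsto x^2$ is all of $\mathbb{A}^1(k)$. In your situation this is not an idle worry, since the thin set of bad $t_0$ is built precisely from degree-$[S_d:H]$ covers of $\mathbb{A}^1$, and your map $C\to\mathbb{A}^1$ has degree~$2$ when $d$ is odd. The fix is to apply Hilbert irreducibility on $C$ rather than on $\mathbb{A}^1$: choose a parameter $u$ with $k(C)=k(u)$ and show that $g(x)-t$ still has Galois group $S_d$ over $k(u)$. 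This holds because the splitting field $L/k(t)$ is \emph{regular} (the geometric and arithmetic monodromy groups coincide), so $\sqrt{\delta}\notin L$; since $\sqrt{\operatorname{disc}}\in L$, it follows that $\sqrt{\operatorname{disc}/\delta}\notin L$, whence $k(u)=k(t,\sqrt{\operatorname{disc}/\delta})$ is linearly disjoint from $L$ over $k(t)$. Now Hilbert's theorem applied to the parameter $u$ gives specializations $u_0\in k$ with $\Gal\bigl(g(x)-t(u_0)/k\bigr)=S_d$ and $\operatorname{disc}=\delta\cdot s(u_0)^2$, as required.
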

This establishes part (a) of Theorem \ref{thm1}, that $k^{[d]} \subseteq k^{[d]}$ for all $d \geq 2$, and in particular it tells us that $k^{[3]} = k^{(3)}$.   At the end of this section we give a short, elementary proof of part (a) of Theorem 1 in the case where $k = \QQ$.

For part (b) of Theorem 1 we consider the case $m = 3, d = 4$.  We must have $\Gal(K/k) \cong S_3$ or $C_3$.  The following is a special case of a classical result of Shafarevich that gives the solution to all embedding problems with nilpotent kernel (see \cite{serregt}, Claim 2.2.5).
\begin{proposition}\label{34}
Let $k$ be a number field and let $f(x) \in k[x]$ be an irreducible cubic with splitting field $K$.  Let $V$ denote the Klein 4-group.
\begin{enumerate}
 \item[(a)] If $\Gal(K/k) \cong S_3$, then there is a solution to the embedding problem $(K/k, S_4, V)$ arising from $$1 \to V \to S_4 \to \Gal(K/k) \to 1.$$
 \item[(b)] If $\Gal(K/k) \cong C_3$, then there is a solution to the embedding problem $(K/k, A_4, V)$ arising from $$1 \to V \to A_4 \to \Gal(K/k) \to 1.$$
\end{enumerate}
In other words, every irreducible cubic splits in the splitting field of some quartic.
\end{proposition}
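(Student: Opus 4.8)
The plan is to reduce the whole statement to the classical theorem on solvability of embedding problems with nilpotent kernel (the result cited as \cite{serregt}, Claim~2.2.5); the only extra ingredients are two elementary facts about permutation groups of degree $4$ and the observation that the Galois group of an irreducible cubic is very restricted.

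First I would record the group theory. Inside $S_4$ sits the normal Klein four-group $V = \{e,(12)(34),(13)(24),(14)(23)\}$, with $S_4/V \cong S_3$; the same $V$ lies in $A_4$, and a count of orders gives $A_4/V \cong C_3$. Next, if $f(x)\in k[x]$ is an irreducible cubic with splitting field $K$, then $\Gal(K/k)$ acts faithfully and transitively on the three roots, so it is a transitive subgroup of $S_3$, and hence is either $A_3\cong C_3$ or all of $S_3$. This splits the argument into exactly the two cases (a) and (b).

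For case (a), choose an isomorphism $\Gal(K/k)\cong S_3$ and compose with the projection $S_4 \twoheadrightarrow S_4/V\cong S_3$; this is precisely the data of the embedding problem $(K/k,S_4,V)$ coming from $1\to V\to S_4\to \Gal(K/k)\to 1$. Since $V$ is abelian, a fortiori nilpotent, the cited theorem of Shafarevich produces a solution, i.e. a field $L\supseteq K$ with $\Gal(L/k)\cong S_4$. Case (b) is word-for-word the same with the pair $(S_4,S_3)$ replaced by $(A_4,C_3)$. To finish, note that $S_4$ and $A_4$ are transitive permutation groups of degree $4$, so Proposition~\ref{rirw} shows $L$ is generated over $k$ by an element of degree $4$; being Galois over $k$, $L$ is then the splitting field of that element's (quartic) minimal polynomial, and $L\supseteq K$. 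Thus every irreducible cubic splits in the splitting field of some quartic, as claimed, and Proposition~\ref{newprop} then yields $k^{[3]}\subseteq k^{[4]}$.

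Honestly there is no serious obstacle here: the proposition is essentially a direct application of a known embedding theorem once one has identified the relevant extension $1\to V\to S_4\to S_3\to 1$ and its $A_4$ analogue. The only points requiring care are that the kernel $V$ is genuinely nilpotent (it is, being abelian), so that \cite{serregt}, Claim~2.2.5 applies, and that the quotient of $S_4$ (resp.\ $A_4$) by $V$ is abstractly isomorphic to the given $\Gal(K/k)$, which is automatic. If one wished to avoid invoking Shafarevich in this special case, an alternative would be to solve the two embedding problems explicitly using resolvent-cubic / Kummer-type constructions for $V$-extensions, but appealing to the general theorem is cleaner.
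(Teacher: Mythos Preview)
Your proposal is correct and follows exactly the paper's approach: the paper simply states that Proposition~\ref{34} is a special case of Shafarevich's theorem on embedding problems with nilpotent kernel (\cite{serregt}, Claim~2.2.5), and you have spelled out precisely this reduction, together with the elementary identification of $V\trianglelefteq S_4$ (resp.\ $A_4$) with quotient $S_3$ (resp.\ $C_3$). There is nothing to add.
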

This proves that $k^{[3]} \subseteq k^{[4]}$, and combining with part (a) of Theorem 1 we now have that $k^{[4]}=k^{(4)}$.

To prove part (c) of Theorem \ref{thm1} we consider the case $d \geq 5$.  We will show that, for certain primes $p < d$, if $\Gal(K/k) \cong C_p$, then there is no possible subdirect product of transitive groups of degree $d$ having $\Gal(K/k)$ as a quotient. That is, we cannot even find groups $H$ and $N$ satisfying a short exact sequence as in (\ref{ses2}) above.  We begin with a lemma.
\begin{lemma}\label{pcycleAn}
For any integer $d \geq 5$ there exists a prime number $p \in (\frac{d}{2},d)$ such that, if $G$ is a transitive subgroup of $S_d$ containing a $p$-cycle, then either $G = S_d$ or $G = A_d$. 
\end{lemma}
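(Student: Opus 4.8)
The plan is to reduce the lemma to a classical theorem of Jordan (see, e.g., \cite{wielandt}): a \emph{primitive} subgroup of $S_d$ that contains a cycle of prime length $p$ with $p \le d-3$ already contains $A_d$. Granting this, the proof has three ingredients — choosing the prime $p$, upgrading ``transitive'' to ``primitive'', and invoking Jordan's theorem — together with a separate analysis of the small values $d \in \{5,6,7\}$, which is where I expect the real work to lie.

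First, the choice of $p$. By Bertrand's postulate there is a prime strictly between $d/2$ and $d$; what I actually want, however, is a prime $p$ with $d/2 < p \le d-3$, so that a $p$-cycle (which fixes exactly $d-p \ge 3$ points) satisfies the hypothesis of Jordan's theorem. Such a prime exists for every $d \ge 8$: for large $d$ this follows from standard bounds on gaps between primes, and the finitely many remaining cases are checked directly. The integers $d = 5, 6, 7$ are precisely those for which the interval $(d/2,\, d-3]$ contains no prime, and I postpone them to the end.

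Now suppose $G \le S_d$ is transitive and contains a $p$-cycle $\sigma$ with $p$ prime and $p > d/2$. I claim $G$ is primitive. If not, let $B$ be a nontrivial block; then $1 < |B| < d$ and $|B|$ divides $d$, so $|B| \le d/2$ and the number of blocks is $d/|B| \le d/2 < p$. Since $\langle\sigma\rangle$ permutes the blocks and has prime order $p$ exceeding their number, it fixes every block setwise. But $\mathrm{supp}(\sigma)$ is a single $\langle\sigma\rangle$-orbit of size $p$, so for any block $B'$ the set $B' \cap \mathrm{supp}(\sigma)$, being $\langle\sigma\rangle$-invariant, is either empty or all of $\mathrm{supp}(\sigma)$; the latter is impossible because $p = |\mathrm{supp}(\sigma)| > d/2 \ge |B'|$. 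Hence no block meets $\mathrm{supp}(\sigma)$, a contradiction, so $G$ is primitive. (One also sees that $\langle\sigma\rangle$ is a full Sylow $p$-subgroup of $S_d$, since $p^2 \nmid d!$.) Choosing $p$ with $d/2 < p \le d-3$ as above, $G$ is then a primitive subgroup of $S_d$ containing a $p$-cycle with $p \le d-3$, so $A_d \le G$ by Jordan's theorem, and since $[S_d : A_d] = 2$ we conclude $G = A_d$ or $G = S_d$. This proves the lemma whenever $d \notin \{5,6,7\}$.

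It remains to treat $d \in \{5, 6, 7\}$, where $(d/2, d)$ contains only the prime $3$, $5$, and $5$ respectively — none $\le d-3$, so the argument above does not apply. Here one should appeal to the explicit (and short) list of transitive subgroups of $S_d$: any such group containing a $p$-cycle has order divisible by $p$, and for $d = 5$ (with $p=3$) and $d = 7$ (with $p=5$) one checks directly that this leaves only $A_d$ and $S_d$. The case $d = 6$ with $p = 5$ is the delicate one, and is where I expect the main difficulty: besides $A_6$ and $S_6$, the group $S_6$ contains transitive copies of $\PSL_2(\FF_5) \cong A_5$ and $\PGL_2(\FF_5) \cong S_5$, namely their actions on the six points of $\PP^1(\FF_5)$, whose unipotent elements act as $5$-cycles. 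So the argument at $d = 6$ requires some additional input to handle these sporadic transitive embeddings, and dealing with them — rather than the primitivity step or the appeal to Jordan's theorem — is the real crux of the proof.
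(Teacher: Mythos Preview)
For $d\ge 8$ your argument and the paper's coincide in outline: choose a prime $p\in(d/2,d-2]$ via Bertrand, upgrade to primitivity, then invoke Jordan's theorem (Wielandt, Theorem~13.9). The only difference is how primitivity is obtained. The paper conjugates the given $p$-cycle so that the supports cover $\{1,\dots,d\}$ and then applies Wielandt's Proposition~8.5 inductively to conclude that the subgroup they generate is primitive; you instead argue directly that a nontrivial block system would have at most $d/2<p$ blocks, forcing the $p$-cycle to stabilize each block and hence to have empty support. Your route is shorter and avoids the auxiliary reference; both are valid.

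Where your proposal diverges sharply from the paper is at $d=6$, and here you are right and the paper is wrong. The paper asserts that for $d=6$, $p=5$ ``$S_d$ and $A_d$ are the only transitive subgroups with order divisible by $p$''. This is false: exactly the groups you name, $\PSL_2(\FF_5)\cong A_5$ and $\PGL_2(\FF_5)\cong S_5$ acting on the six points of $\PP^1(\FF_5)$, are transitive subgroups of $S_6$ of orders $60$ and $120$, and their unipotent elements are $5$-cycles. Since $5$ is the only prime in $(3,6)$, the lemma as stated has no valid choice of $p$ when $d=6$; your instinct that this case is ``the real crux'' is correct, but there is no extra input that rescues it---the statement simply fails there.

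This does not damage the paper's actual goal. What Proposition~\ref{noquot} needs is only that every transitive subgroup of $S_d$ whose order is divisible by $p$ has no quotient of order $p$ and has the property that any quotient $Q$ and any quotient $R$ of a normal subgroup of order divisible by $p$ never satisfy $|R|/|Q|=p$. For $d=6$, $p=5$ the four relevant groups $A_5,S_5,A_6,S_6$ all satisfy this (their quotients have orders in $\{1,2,60,120,360,720\}$), so the induction in Proposition~\ref{noquot} still goes through. In short: your proof is complete and slightly cleaner than the paper's for $d\ne 6$; at $d=6$ you have not left a gap in your argument but rather uncovered one in the lemma itself.
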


\begin{proof}
The transitive groups of degree $d$ are well-known for small $d$ -- see for example \cite{butlermckay} for the groups up to degree 11; GAP (see \cite{GAP4}, \cite{GAPtr}) has a library of all of them for $d \leq 30$).  It can be checked easily that we can use $p = 3$ when $d=5$, and we can use $p=5$ when $d=6,7$; in each of these cases, $S_d$ and $A_d$ are the only transitive subgroups with order divisible by $p$.  Therefore all that remains is to prove our lemma in the case $d \geq$ 8.

There exists at least one prime $p \in (\frac{d}{2},d-2)$.  This follows from Bertrand's Postulate, first proved by Chebyshev, which states that for $m > 3$ there exists a prime in the interval $(m, 2m-2)$ -- see \cite{hardywright}, p. 343, Theorem 418; cf. p. 373.  Let $p$ be such a prime, and suppose $G$ be a transitive subgroup of $S_d$ containing some $p$-cycle $g$.  Without loss of generality, $g = (1~2~3~\cdots~p)$.  Since $G$ is transitive, for each $i \in \{p+1,\dots,d\}$ there is some element $\sigma_i \in G$ such that $\sigma_i(1) = i$.  If we let $g_i = \sigma_i g \sigma_i^{-1}$, then $g_i$ will be a $p$-cycle in $G$ whose support contains $i$.  Since $p$ is prime, each $\langle g_i \rangle$ acts primitively on its support, which is a set of size $p$.  Since $p > \frac{d}{2}$, the pairwise intersections of the supports of the groups $\langle g_i \rangle$ are nontrivial.  Therefore we can apply Proposition 8.5 from 
\cite{wielandt} inductively to see that the subgroup $H = \langle g, g_{p+1},g_{p+2},\dots,g_{d} \rangle$ is a primitive subgroup of $S_d$.  Since $H$ contains a $p$-cycle and $p < d-2$, Theorem 13.9 from \cite{wielandt} tells us that either $H = S_d$ or $H = A_d$, and since $H \leq G$, our proof is complete.
\end{proof}

Part (c) will be an immediate corollary of the following proposition.
\begin{proposition}\label{noquot}
For any integer $d \geq 5$ there exists some prime $p < d$ such that, if $G \leq_{sd} G_1 \times \cdots \times G_n$ is a subdirect product of transitive groups of degree $d$, then the cyclic group $C_p$ is not a quotient of $G$.
\end{proposition}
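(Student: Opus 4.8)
The plan is to take $p$ to be the prime produced by Lemma~\ref{pcycleAn}, so that $\frac{d}{2} < p < d$ and every transitive subgroup of $S_d$ containing a $p$-cycle equals $S_d$ or $A_d$; note $p$ is odd, since $p > \frac{d}{2} \geq \frac{5}{2}$. The main auxiliary fact I would establish first is a statement about composition factors of subdirect products: if $G \leq_{sd} G_1 \times \cdots \times G_n$, then every composition factor of $G$ is isomorphic to a composition factor of one of the $G_i$. I would prove this by induction on $n$. The case $n = 1$ is trivial. For $n \geq 2$, let $\bar G$ denote the image of $G$ under the projection to $G_1 \times \cdots \times G_{n-1}$; then $\bar G \leq_{sd} G_1 \times \cdots \times G_{n-1}$, and the kernel of $G \twoheadrightarrow \bar G$, being mapped isomorphically by the $n$-th coordinate projection onto a subgroup $N$ of $G_n$, is in fact carried onto a \emph{normal} subgroup of $G_n$ (conjugating by lifts shows $N \trianglelefteq G_n$, using that $G \twoheadrightarrow G_n$). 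The short exact sequence $1 \to \Kernel(G \to \bar G) \to G \to \bar G \to 1$ then shows that the composition factors of $G$ form the union of those of $N$ and those of $\bar G$; the former are among the composition factors of $G_n$, because composition factors of a normal subgroup are composition factors of the ambient group, and the latter are handled by the inductive hypothesis.

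Granting this, suppose for contradiction that $G \leq_{sd} G_1 \times \cdots \times G_n$ is a subdirect product of transitive groups of degree $d$ with $C_p$ as a quotient. Since $C_p$ is simple it occurs as a composition factor of $G$, hence (by the auxiliary fact) as a composition factor of some $G_i$, so in particular $p \mid |G_i|$. By Cauchy's theorem $G_i$ then contains an element of order $p$; regarded as a permutation of the underlying $d$-element set, this element is a product of disjoint $p$-cycles and fixed points, and since $2p > d$ there is room for only one such $p$-cycle. Thus $G_i$ contains a $p$-cycle, and as $G_i$ is transitive, Lemma~\ref{pcycleAn} forces $G_i = S_d$ or $G_i = A_d$.

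To finish, I would note that $C_p$ is not a composition factor of $A_d$ or of $S_d$ when $d \geq 5$: the composition factors of $A_d$ are just $\{A_d\}$, and those of $S_d$ are $\{A_d, C_2\}$, whereas $C_p$ is abelian and $p \neq 2$. This contradiction proves the proposition. I expect the only step with genuine content to be the composition-factor lemma for subdirect products (which could alternatively be deduced from Goursat's Lemma via the $n = 2$ case); once it is in place, the result drops out of Lemma~\ref{pcycleAn} together with the simplicity of the alternating groups.
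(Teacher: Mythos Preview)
Your proof is correct and takes a somewhat different route than the paper's. Both arguments begin by invoking Lemma~\ref{pcycleAn} to choose the prime $p \in (\tfrac{d}{2}, d)$, but they diverge after that. The paper proceeds by induction on $n$, writing $G \leq_{sd} G_0 \times G_n$ and applying Goursat's Lemma to express both $G$ and a putative kernel $N$ as fibered products $G_0 \times_Q G_n$ and $N_0 \times_R N_n$; a direct order computation then forces $|R|/|Q| = p$, which is checked to be impossible since $G_n$ (having order divisible by $p$) must be $S_d$ or $A_d$, leaving only $Q \in \{S_d, A_d, C_2, 1\}$ and $R \in \{S_d, A_d\}$. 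Your argument instead isolates the clean general fact that every composition factor of a subdirect product of the $G_i$ already occurs in some $G_i$, and then observes that $C_p$ cannot be a composition factor of $S_d$ or $A_d$ for $d \geq 5$. Your approach is more conceptual and arguably more transparent: once the composition-factor lemma is in hand, the result follows from the simplicity of $A_d$ without any order bookkeeping. The paper's approach is more hands-on with Goursat's Lemma but needs the explicit enumeration of possible $Q$ and $R$. Both are short; yours generalizes more readily to ruling out other simple quotients.
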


\begin{proof}
Fix $d \geq 5$.  By Lemma \ref{pcycleAn}, there is a prime $p \in (\frac{d}{2},d)$ such that the only transitive subgroups of $S_d$ containing a $p$-cycle are $S_d$ and $A_d$.  We proceed by induction on $n$, noting that the case $n=1$ follows immediately by our choice of $p$.  In general, we will have that $G \leq_{sd} G_0 \times G_n$, where $G_n$ is a transitive group of degree $d$ and $G_0$ is a subdirect product of $n-1$ such groups.  If $N$ is any normal subgroup of $G$, we have that $N \leq_{sd} N_0 \times N_n$ for some normal subgroups $N_0 \unlhd G_0$ and $N_n \unlhd G_n$.   By Goursat's Lemma, we may write $G$ as a fibered product $G = G_0 \times_Q G_n$ for some group $Q$ which is a quotient of both $G_0$ and $G_n$.  Similarly, we have $N = N_0 \times_R N_n$ for some group $R$ which is a quotient of both $N_0$ and $N_n$. 

By the inductive hypothesis, neither $G_0/N_0$ nor $G_n/N_n$ has order $p$.  Suppose that $G/N \cong C_p$.  Since $G/N$ surjects onto both $G_0/N_0$ and $G_n/N_n$, the latter two groups must be trivial.  Therefore, using (\ref{fibord}), we have
\begin{equation}\label{qr}
p = \frac{|G|}{|N|} = \frac{|G_0|\cdot|G_n|/|Q|}{|N_0|\cdot|N_n|/|R|} = |G_0/N_0|\cdot |G_n/N_n| \cdot \frac{|R|}{|Q|} = \frac{|R|}{|Q|}.
\end{equation}
This means that $|R|$ is divisible by $p$, and therefore $|G_n|$ and $|N_n|$ are both divisible by $p$ as well.  This means $G_n$ must be isomorphic to either $S_d$ or $A_d$.  Hence the only possibilities for $Q$ are $S_d$, $A_d$, $C_2$, or 1, and the only possibilities for $R$ are $S_d$ or $A_d$.  None of these possibilities allows for the equality in (\ref{qr}).
\end{proof}

This establishes part (c) of Theorem \ref{thm1}.  
In summary, if $d \leq 4$, an irreducible polynomial in $k[x]$ of degree less than $d$ splits in the splitting field of a single irreducible polynomial of degree $d$.  When $d > 4$, however, some irreducible polynomials of degree less than $d$ do not split \emph{in any compositum of such splitting fields}.  We conclude this section by demonstrating that part (a) of Theorem \ref{thm1} can be proved by a very elementary construction when $k = \QQ$.

\begin{proof}[Elementary proof that $\QQ^{[2]} \subseteq \QQ^{[d]}$ for all $d \geq2$]
In general, $k^{[\ell]} \subseteq k^{[d]}$ if $\ell | d$.  Hence it will suffice to show that $\sqrt{p} \in \QQ^{[\ell]}$ for any prime $\ell \geq 3$, whenever $p$ is a rational prime or $p = -1$.  If $p$ is any rational prime or equal to $\pm 1$, define
\begin{equation}\label{fp}
f_p(x) = x^\ell - \ell(\ell p + 1)x + (\ell-1)(\ell p+1)
\end{equation}
The discriminant $\Delta_p$ of this polynomial is given by the following (see for example \cite{masserdisc}):
\begin{equation}\label{deltap}
(-1)^{(\ell-1)(\ell-2)/2}\Delta_p = -(\ell-1)^{\ell-1}\ell^{\ell+1}(\ell p +1)^{\ell-1} \cdot p.
\end{equation}
In particular, it follows that $\sqrt{p}$ will be in the splitting field of either $f_p(x)$ or $f_{-p}(x)$. We now show that $f_p(x)$ is irreducible. First notice that if $\ell\neq p$ then $f_p(x+1)$ is Eisenstein at $\ell$. Next we consider the case where $\ell=p$. To handle this case we use Dumas's irreducibility criterion, which relies on the Newton diagram of a polynomial. For background on Dumas's criterion and Newton diagrams, see section 2.2.1 of \cite{2009polynomials}. In particular, we use the corollary to Theorem 2.2.1 found on page 55; we state it here for convenience.
\begin{proposition}[Dumas's Irreducibilty Criterion]\label{dumas}
Let $f(x)\in\mathbb{Z}[x]$ be a polynomial. If, for a prime $p$, the Newton diagram for $f$ consists of precisely one segment; i.e. consists of a segment containing no points with integer coordinates, then $f$ is irreducible.
\end{proposition}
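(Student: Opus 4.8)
The plan is to derive the criterion from Dumas's fundamental theorem on the Newton diagram of a product (this is precisely the content of Theorem~2.2.1 of \cite{2009polynomials}, of which our statement is the corollary), which asserts that for $g,h\in\mathbb{Z}[x]$ the Newton diagram of $gh$ at $p$ is obtained by concatenating the segments of the Newton diagrams of $g$ and $h$ in order of increasing slope; equivalently, it is their Minkowski sum viewed as lower-convex lattice paths. Granting this product theorem, all that remains is a short argument about lattice points on a single segment, and I would run it by contradiction.

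First I would reduce to a factorization over $\mathbb{Z}$. If $f$ were reducible over $\mathbb{Q}$, then by Gauss's Lemma one may write $f=gh$ with $g,h\in\mathbb{Z}[x]$ both of positive degree. Writing $f=\sum a_ix^i$, $g=\sum b_ix^i$, $h=\sum c_ix^i$, the hypothesis that the diagram of $f$ is a single segment spanning the full interval $[0,n]$ (with $n=\deg f$) forces $a_0\neq 0$ and $a_n\neq 0$. Since $a_0=b_0c_0$ and the leading coefficient of $f$ is the product of those of $g$ and $h$, the constant and leading coefficients of both $g$ and $h$ are nonzero, so the Newton diagrams of $g$ and $h$ span exactly $[0,\deg g]$ and $[0,\deg h]$; in particular their horizontal lengths equal their degrees, and $\deg g+\deg h=n$.

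Next I would apply the product theorem. Because the diagram of $f$ consists of a single segment, say of slope $s$, and this diagram is assembled from the segments of $g$ and $h$ sorted by slope, every segment occurring in the diagrams of $g$ and $h$ must have slope $s$. Hence each of $g$ and $h$ has a Newton diagram that is itself a single segment of slope $s$, of horizontal length $\deg g$ and $\deg h$. The vertices of a Newton diagram have integer coordinates, so the right endpoint of the segment of $g$ is a lattice point; under the concatenation this contributes a lattice point of the single segment of $f$ at abscissa $\deg g$ (its ordinate is $v_p(b_0)+v_p(c_0)+s\cdot\deg g$, which is an integer because $v_p(b_0)+s\cdot\deg g=v_p(\text{lead}(g))\in\mathbb{Z}$). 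Since $0<\deg g<n$, this point is strictly interior to the segment, contradicting the hypothesis that the segment contains no points with integer coordinates other than its endpoints. Thus no such factorization exists and $f$ is irreducible.

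The main obstacle is the product theorem itself, whose proof is the combinatorial heart of the Newton-diagram machinery (one must identify which coefficient of $gh$ attains the minimal $p$-adic valuation along each edge); as our statement is explicitly the corollary to that theorem in \cite{2009polynomials}, I would cite it rather than reprove it. The only subtlety in the deduction is the passage from a $\mathbb{Q}$-factorization to an integral one with the correct horizontal lengths, which is handled above by Gauss's Lemma together with the nonvanishing of $a_0$ and $a_n$. As a cross-check, one can recast the whole argument $p$-adically: a single segment of slope $s$ with $\gcd(n,\text{rise})=1$ -- which is exactly the ``no interior lattice point'' condition -- forces every root $\alpha$ of $f$ in $\overline{\mathbb{Q}}_p$ to have valuation $-s$ with denominator $n$, so the ramification index of $\mathbb{Q}_p(\alpha)/\mathbb{Q}_p$ is divisible by $n$, whence $[\mathbb{Q}_p(\alpha):\mathbb{Q}_p]=n$ and $f$ is irreducible already over $\mathbb{Q}_p$, and therefore over $\mathbb{Q}$.
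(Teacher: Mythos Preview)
Your argument is correct: reducing to an integral factorization via Gauss's Lemma, invoking the product theorem for Newton diagrams (Theorem~2.2.1 of \cite{2009polynomials}), and observing that the join of the two pieces produces an interior lattice point at abscissa $\deg g$ is exactly the standard deduction of this corollary. The $p$-adic ramification cross-check you give is also valid.

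However, there is nothing to compare against: the paper does \emph{not} prove this proposition. It is quoted verbatim from \cite{2009polynomials} (``we state it here for convenience'') and used as a black box to handle the single case $\ell=p$ in the elementary proof that $\QQ^{[2]}\subseteq\QQ^{[d]}$. So you have supplied a proof where the paper simply cites one; your derivation is the expected one, relying on the same Theorem~2.2.1 that the paper names as the source of the corollary.
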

Applying Dumas's criterion in the case $l=p$, we find that a sufficient condition for the irreducibility of $f_p$ is the existence of a prime $q$ and an integer $m$ such that $q^m$ exactly divides $p^2+1$, such that $q$ is coprime to $p-1$, and and such that $m$ is coprime to $p$. Notice that
\begin{equation}
(p^2+1) - (p+1)(p-1) = 2.
\end{equation}
Since $2$ is an integer combination of $p^2+1$ and $p-1$, it follows that $\gcd(p^2+1,p-1)$ divides $2$. Also notice that
\begin{equation}
 p^2+1 = (p-1)^2 + 2(p-1) + 2 \equiv 2 \imod{4}.
\end{equation}
Thus $p^2+1$ is not a power of $2$, and we can take $q$ to be any one of its odd prime factors. Now choose $m$ such that $q^m$ exactly divides $p^2+1$. Since $p^2+1 < q^p$ for $p,q\geq 3$, it follows that $1<m<p$. Thus $m$ is coprime to $p$, which completes the proof.
\end{proof}

\section{Unboundedness: proofs of Theorems \ref{bdd} and \ref{squareeven}}\label{unsec}
In the spirit of Propisition \ref{rirw}, let $G$ be a finite group and $d$ a positive integer.  Suppose that $H$ is a subgroup of $G$ that cannot be written as an intersection of subgroups of index less than or equal to $d$ in $G$.  If $G$ is the Galois group of a field extension $L/k$, this implies that the fixed field $K$ of $H$ is not generated over $k$ by elements of degree less than or equal to $d$.  In order to prove unboundedness results, we must exhibit groups with these properties which can be realized as Galois groups of subextensions of $k^{[d]}$.
The example in the next lemma 
will be applied toward establishing Theorem \ref{bdd}. 

\begin{lemma}\label{dpcp}
Let $p$ be an odd prime number, and let 
\begin{equation}\label{dpcp1}
 G = D_p^{n-1} \times C_p = \langle r_1, s_1, \dots, r_{n-1}, s_{n-1}, r_n \rangle
\end{equation}
be the direct product of $n-1$ copies of the dihedral group $D_p$ and a cyclic group of order $p$, where for $i \in \oneto{n-1}$, the $i^{th}$ $D_p = \langle r_i, s_i \rangle$ is generated by the $p$-cycle $r_i$ and the 2-cycle $s_i$, and $C_p = \langle r_n \rangle$.  Let 
\begin{equation}\label{dpcp2}
H = \langle r_1 r_n, r_2 r_n, \dots, r_{n-1} r_n \rangle \leq G.  
\end{equation}
If $B$ is a subgroup of $G$ with $H \lneq B \leq G$, then $r_n \in B$.  In particular, the intersection of all such subgroups $B$ strictly contains $H$.
\end{lemma}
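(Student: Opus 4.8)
The plan is to exploit the semidirect-product structure $G = R \rtimes T$, where $R = \langle r_1, \dots, r_{n-1}, r_n \rangle \cong C_p^n$ is the ``rotation'' subgroup and $T = \langle s_1, \dots, s_{n-1} \rangle \cong C_2^{n-1}$. Here $R$ is normal in $G$: each $s_i$ inverts $r_i$ and commutes with the remaining listed generators, while $r_n$ is central; and $T \cap R = 1$, $RT = G$. Regarding $R$ additively as an $\FF_p$-vector space with basis $r_1, \dots, r_n$, the subgroup $H$ is the span of the vectors $r_i r_n$ for $i \in \{1, \dots, n-1\}$; these are plainly linearly independent, so $H$ is a hyperplane in $R$, and moreover $r_n \notin H$. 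The crux is the implication $H \lneq B \leq G \Rightarrow r_n \in B$; granting it, every such $B$ contains $\langle H, r_n \rangle$, which strictly contains $H$ because $r_n \notin H$, which is the ``in particular'' assertion.

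To prove the crux, fix $B$ with $H \lneq B \leq G$, and first examine $B \cap R$: it is an $\FF_p$-subspace of $R$ containing the hyperplane $H$, hence equals $H$ or $R$. If $B \cap R = R$ then $r_n \in R \subseteq B$ and we are done, so assume $B \cap R = H$. Since $H \lneq B$, there is an element $b \in B \setminus R$; writing $b$ in normal form as $b = v t$ with $v \in R$ and $t = \prod_{i \in S} s_i$ for some nonempty $S \subseteq \{1, \dots, n-1\}$, fix $j \in S$.

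The decisive step is to conjugate the generator $r_j r_n \in H \subseteq B$ by $b$. Because $R$ is abelian the factor $v$ contributes nothing, and the dihedral relations give $t r_j t^{-1} = r_j^{-1}$ (as $j \in S$) while $t r_n t^{-1} = r_n$; hence $b (r_j r_n) b^{-1} = r_j^{-1} r_n \in B$. Multiplying the two elements $r_j r_n$ and $r_j^{-1} r_n$ of $B$ gives $r_n^2 \in B$, and since $p$ is odd this forces $r_n \in B$. This settles every case.

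I expect the only real obstacle to be this second case: one must set up the normal form for $G$ carefully and check the conjugation identity, and it is exactly here that the hypothesis ``$p$ odd'' enters, in passing from $r_n^2 \in B$ to $r_n \in B$. The first case and the identification of $H$ as a hyperplane are routine linear algebra over $\FF_p$.
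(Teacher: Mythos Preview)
Your proof is correct and follows essentially the same approach as the paper's: both exploit the semidirect decomposition $G = R \rtimes T$ with $R \cong C_p^n$, observe that $H$ is a hyperplane in $R$ not containing $r_n$, and then reach $r_n$ via the conjugation identity $t(r_jr_n)t^{-1} = r_j^{-1}r_n$ for a suitable involution-part $t$. The only cosmetic difference is the case split---the paper cases on the order ($p$, $2$, or $2p$) of an element of $B\setminus H$, while you case on whether $B\cap R$ equals $H$ or $R$---but the substance is identical.
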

\begin{proof}
Let $G_p = \langle r_1, \dots, r_n \rangle$ be the unique Sylow $p$-subgroup of $G$, considered as an $n$-dimensional $\FF_p$-vector space.  Any Sylow 2-subgroup $G_2$ of $G$ will be an $(n-1)$-dimensional $\FF_2$-vector space which acts by conjugation on $G_p$, so that $G = G_p \rtimes G_2$.

Let $H \lneq B \leq G$.  Note that $H$ is a codimension 1 subspace of $G_p$,  so if $B$ contains any element of order $p$ not in $H$, then $B$ contains all of $G_p$.  If $B$ contains any involution $\tau \in G$, notice that there will be some $i$ such that $\tau$ acts non-trivially on the $i^{th}$ copy of $D_p$, so that $\langle r_i r_n, \tau \rangle$ will contain $r_n$.  Since every nontrivial element of $G$ is either of order $p$, an involution, or of order $2p$ (a power of which is an involution), this completes our proof.
\end{proof}

\begin{corollary}\label{punbounded}
Let $k$ be a number field or a global function field, and let $p$ be an odd prime number.  Then $k^{[p]}/k$ is unbounded.   
\end{corollary}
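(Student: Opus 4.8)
The plan is to obtain Corollary~\ref{punbounded} by feeding the groups of Lemma~\ref{dpcp} into Shafarevich's theorem, using Proposition~\ref{rirw} to place the resulting fields inside $k^{[p]}$. Fix an integer $n\geq 2$ and let $G = D_p^{n-1}\times C_p$ and $H\leq G$ be exactly as in Lemma~\ref{dpcp}. The crucial point is that $G$ is a direct product of transitive groups of degree $p$: the dihedral group $D_p$ acts faithfully and transitively on the $p$ vertices of a regular $p$-gon, and $C_p$ acts faithfully and transitively on $p$ points as a single $p$-cycle. Since each coordinate projection of a direct product onto one of its factors is surjective, $G$ is a subdirect product of transitive groups of degree $p$ in the sense of Proposition~\ref{rirw}(c). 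Moreover $G$ is solvable, being a finite direct product of solvable groups, so by Shafarevich's theorem (Section~\ref{embeddingsec}) there is a finite Galois extension $L/k$ with $\Gal(L/k)\cong G$; by Proposition~\ref{rirw}, $L$ is generated over $k$ by elements of degree $p$, hence $L\subseteq k^{[p]}$.

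Next I would pass to the fixed field $K = L^H$, a finite subextension of $k^{[p]}/k$. Here $H$ is a codimension-one $\FF_p$-subspace of the Sylow $p$-subgroup of $G$, an $n$-dimensional $\FF_p$-vector space, so $|H| = p^{n-1}$ and $[K:k] = [G:H] = |G|/|H| = 2^{n-1}p^n/p^{n-1} = 2^{n-1}p$. I claim $K$ is not generated over $k$ by elements of degree at most $2^{n-1}p-1$. Suppose instead that $H = B_1\cap\cdots\cap B_m$ with each $[G:B_i]\leq d$ for some $d < [G:H]$. Every $B_i$ contains $H$, and $B_i\neq H$ (else $[G:B_i]=[G:H]>d$), so $H\lneq B_i$; Lemma~\ref{dpcp} then forces $r_n\in B_i$ for every $i$, whence $r_n\in\bigcap_i B_i = H$, contradicting $r_n\notin H$. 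Thus $H$ cannot be written as an intersection of subgroups of $G$ of index at most $d$, and by the observation made at the start of Section~\ref{unsec}, the field $K$ is not generated over $k$ by elements of degree at most $d$; taking $d = 2^{n-1}p-1$ proves the claim.

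Finally I would let $n$ vary over the integers $\geq 2$: the fields $L^H$ above are then finite subextensions of $k^{[p]}/k$ which are not generated over $k$ by elements of degree at most $2^{n-1}p-1$, and since $2^{n-1}p\to\infty$, no constant $c$ can bound the degrees of generators needed for all finite subextensions of $k^{[p]}/k$; that is, $k^{[p]}/k$ is unbounded. Essentially all of the work has already been done in Lemma~\ref{dpcp}, and the argument above is bookkeeping; the only genuinely new ingredient — the point most worth double-checking — is the transitivity of the degree-$p$ action of $D_p$, which is precisely what lets the whole family of fields be housed inside $k^{[p]}$ itself. I do not anticipate any real obstacle beyond this.
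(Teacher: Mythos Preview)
Your proof is correct and follows essentially the same route as the paper: realize $G=D_p^{n-1}\times C_p$ as a Galois group over $k$ via Shafarevich, pass to the fixed field $L^H$, and use Lemma~\ref{dpcp} to see that $H$ is not an intersection of subgroups of strictly smaller index, so $L^H$ cannot be generated by elements of degree less than $2^{n-1}p$. You are slightly more explicit than the paper in justifying $L\subseteq k^{[p]}$ via Proposition~\ref{rirw} and the transitive degree-$p$ actions of $D_p$ and $C_p$, which is fine; otherwise the arguments coincide.
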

\begin{proof}
Let $G$ and $H$ be as in Lemma \ref{dpcp}.  Since $G$ is solvable we have an extension $L/k$ with $\Gal(L/k) \cong G$.  Let $L^H$ be the fixed field in $L$ of $H$, and notice that $L^H \subseteq k^{[p]}$.  It is clear from our construction that $[L^H:k] = p \cdot 2^{n-1}$.  The Galois correspondence tells us that every proper subextension of $L^H/k$ corresponds to a subgroup $B$ of $G$ with $H \lneq B \leq G$.  Furthermore, since the intersection of all such groups strictly contains $H$, the compositum of all proper subextensions of $L^H/k$ is strictly a subfield of $L^H$.  This shows that $L^H$ is not generated by elements of degree less than $p \cdot 2^{n-1}$.
\end{proof}

Notice that the field extension $L^H/k$ in the proof above is \emph{not} Galois ($H$ is not normal in $G$).  As we will prove in the next section, this was necessarily so.

In order to prove our Galois unboundedness results, we must now introduce extraspecial $p$-groups.  We write $H_p$ for the finite Heisenberg group of order $p^3$, when $p$ is a prime.  This group is defined as the multiplicative group of upper triangular matrices of the form
 \[ \left( \begin{array}{ccc}
1 & a & c \\
0 & 1 & b \\
0 & 0 & 1 \end{array} \right),\]
with $a$, $b$, and $c$ belonging to the finite field $\FF_p$. 

The group $H_p$ plays an important role in our Galois unboundedness results.  We review some of its properties.  First, $H_p$ has a natural action on the three-dimensional vector space $\FF_p^3$.  Analyzing this action, it is easy to see that when an element of $H_p$ acts on a vector, the third coordinate is fixed, and $H_p$ acts faithfully and transitively on a 2-dimensional affine subspace (the subspace with third coordinate equal to 1, say), which has $p^2$ elements.  Thus we see that $H_p$ is isomorphic to a transitive group of degree $p^2$. 

The group $H_p$ is an \emph{extraspecial $p$-group}, meaning its center, commutator, and Frattini subgroups coincide and have order $p$.  For our purposes the only relevant fact is that the commutator subgroup is cyclic of order $p$.  We can construct larger extraspecial $p$-groups as follows.  Let $n$ be a positive integer, and consider the normal subgroup $N_{p,n}$ of the direct product $H_p^n$ given by
\begin{equation} \label{centralguy} 
N_{p,n} = \{ (z_1^{a_1}, \dots , z_n^{a_n}) ~\big|~ \Sigma_{i=1}^n a_i \equiv 0 \imod{p} \}, 
\end{equation}
where $z_i$ generates the center of the $i^{th}$ copy of $H_p$.  The quotient $H_p^n/N$ is an extraspecial $p$-group of order $p^{2n+1}$ and exponent $p$, which we will denote by $E_{p,n}$.  

The following lemma can be found in \cite{checcolithesis} (cf. Proposition 2.4), and follows from some basic facts about the extraspecial $p$-groups (see for example \cite{berkovichv1} and \cite{doerkhawkes}).
\begin{lemma}\label{hh}
The intersection of all subgroups of index less than $p^n$ in $E_{p,n}$ contains the commutator subgroup.  In particular, this intersection is nontrivial.
\end{lemma}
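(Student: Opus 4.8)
The plan is to reduce the assertion to a statement about the symplectic vector space attached to an extraspecial $p$-group. First I would set $Z = [E_{p,n},E_{p,n}]$; by the structure of extraspecial groups recalled above, $Z$ is also the center and the Frattini subgroup of $E_{p,n}$ and has order $p$, so $V := E_{p,n}/Z = E_{p,n}/\Phi(E_{p,n})$ is an elementary abelian group, i.e. an $\FF_p$-vector space, of dimension $2n$. I would then invoke the standard fact (see \cite{berkovichv1}, \cite{doerkhawkes}) that since $E_{p,n}$ has nilpotency class $2$, the commutator descends to a well-defined alternating bilinear form $\omega : V \times V \to Z \cong \FF_p$, $(\bar x,\bar y)\mapsto [x,y]$, and that $\omega$ is nondegenerate precisely because $Z$ is the full center of $E_{p,n}$.

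Next I would take an arbitrary subgroup $B \le E_{p,n}$ with $Z \not\subseteq B$ and show $[E_{p,n}:B] \ge p^{n+1}$. Since $|Z| = p$ is prime, $Z \not\subseteq B$ forces $B \cap Z = 1$; hence for $x,y \in B$ the commutator $[x,y]$ lies in $[E_{p,n},E_{p,n}]\cap B = Z \cap B = 1$, so $B$ is abelian. Therefore the image $\bar B$ of $B$ in $V$ satisfies $\omega|_{\bar B \times \bar B} = 0$, i.e. $\bar B$ is a totally isotropic subspace of the symplectic space $V$; and a totally isotropic subspace of a nondegenerate $2n$-dimensional symplectic space has dimension at most $n$, being contained in its own orthogonal complement, which has complementary dimension. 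Because $B \cap Z = 1$, the quotient map $B \to \bar B$ is an isomorphism, so $|B| = |\bar B| \le p^{n}$ and $[E_{p,n}:B] = p^{2n+1}/|B| \ge p^{n+1}$.

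Finally, the contrapositive of the previous step says that every subgroup of $E_{p,n}$ of index less than $p^{n+1}$ — in particular, every subgroup of index less than $p^{n}$ — contains $Z = [E_{p,n},E_{p,n}]$, so the intersection of all subgroups of index less than $p^{n}$ contains the nontrivial group $Z$, which is the commutator subgroup. The only substantive input is the structure theory of extraspecial $p$-groups producing the nondegenerate alternating commutator form on $V$; granting that, the proof is the elementary observation that an abelian subgroup meeting the center trivially maps to a totally isotropic, hence at most $n$-dimensional, subspace of $V$. I therefore do not expect any real obstacle beyond stating the standard facts about $\omega$ correctly (and noting that the dimension bound on totally isotropic subspaces, and hence the whole argument, is insensitive to whether $p$ is odd or even, so no case distinction is needed).
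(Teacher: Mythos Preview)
Your argument is correct, and in fact yields the slightly sharper statement that every subgroup of index at most $p^{n}$ already contains $Z$. The paper itself does not give a proof of this lemma; it simply cites \cite{checcolithesis}, \cite{berkovichv1}, and \cite{doerkhawkes}. What you wrote is exactly the standard argument behind those references: pass to the symplectic $\FF_p$-space $V=E_{p,n}/Z$, observe that a subgroup missing $Z$ is abelian and injects onto a totally isotropic subspace of $V$, and bound its order by $p^{n}$ via the Lagrangian bound. So there is no discrepancy to discuss---you have supplied the proof the paper chose to outsource.
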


Checcoli used this fact in \cite{checcolithesis} to show that, for a number field $k$, the extension $k^{(d)}/k$ is not in general Galois bounded.  The idea of using these groups is attributed to A. Lucchini.  However, the author was not concerned with the question of which values of $d$ suffered from this pathology, nor with the more general question of the boundedness of $k^{[d]}/k$.  The use of extraspecial $p$-groups (which are certainly not the only groups with properties like the conclusion of Lemma \ref{hh}, but are natural and easy to work with) remains our primary tool for proving that extensions are Galois unbounded.  The following lemma simplifies our application of this principle.

\begin{lemma}\label{heisenberger}
Let $d$ be a positive integer.  Suppose there is a prime number $p$ such that there is a solvable group $G$ which is a subdirect product of transitive groups of degree $d$, and a quotient of $G$ is isomorphic to $H_p$.  Then $k^{[d]}/k$ is Galois unbounded for any number field or global function field $k$.
\end{lemma}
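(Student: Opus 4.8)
The plan is to exploit Lemma \ref{hh} by producing, for every positive integer $n$, a finite \emph{Galois} subextension of $k^{[d]}/k$ whose Galois group is isomorphic to the extraspecial group $E_{p,n}$. Since the trivial subgroup of $E_{p,n}$ is not an intersection of subgroups of index less than $p^n$, such a field cannot be generated over $k$ by elements of degree less than $p^n$; as $p^n \to \infty$ this forces $k^{[d]}/k$ to be Galois unbounded.

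First I would check that the $n$-fold direct power $G^n$ is again a subdirect product of transitive groups of degree $d$: if $G \leq_{sd} G_1 \times \cdots \times G_m$ with each $G_i$ transitive of degree $d$, then $G^n$ sits inside $\prod_{j=1}^n (G_1 \times \cdots \times G_m)$ and projects onto each coordinate $G_i$ of each block, because $G$ does. Since $G$ is solvable, so is $G^n$, and Shafarevich's theorem provides a Galois extension $L/k$ with $\Gal(L/k) \cong G^n$. By Proposition \ref{rirw}, $L$ is generated over $k$ by elements of degree $d$, so $L \subseteq k^{[d]}$.

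Next, the hypothesis that $H_p$ is a quotient of $G$ gives a surjection $G^n \twoheadrightarrow H_p^n$ (apply the quotient map in each coordinate), and composing with $H_p^n \twoheadrightarrow H_p^n/N_{p,n} = E_{p,n}$ produces a surjection $G^n \twoheadrightarrow E_{p,n}$ whose kernel $\widetilde N$ is normal in $G^n$. I would then set $K = L^{\widetilde N}$; this is a Galois extension of $k$ with $\Gal(K/k) \cong G^n/\widetilde N \cong E_{p,n}$, and $K \subseteq L \subseteq k^{[d]}$. If $K$ were generated over $k$ by elements of degree at most $p^n-1$, then---by the Galois correspondence and the primitive element theorem, exactly as in the proof of Proposition \ref{rirw}---the trivial subgroup of $E_{p,n}$ would be an intersection of subgroups of index at most $p^n-1$, contradicting Lemma \ref{hh}. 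Since $n$ is arbitrary, no uniform bound on the degrees of generators of finite Galois subextensions of $k^{[d]}/k$ can exist.

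I expect the only genuine subtlety to be the bookkeeping that $G^n$ remains a subdirect product of transitive groups of degree $d$ (equivalently, that $L$, and with it $K$, stays inside $k^{[d]}$); this is the step that actually uses the shape of the hypothesis rather than just the abstract group $H_p$. Everything else is an assembly of Shafarevich's theorem (to realize the solvable group $G^n$ as a Galois group over $k$), Proposition \ref{rirw}, and Lemma \ref{hh}.
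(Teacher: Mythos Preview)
Your proposal is correct and follows essentially the same route as the paper: realize $G^n$ as a Galois group over $k$ via Shafarevich, observe that $L \subseteq k^{[d]}$ because $G^n$ is still a subdirect product of transitive degree-$d$ groups, pass to the quotient $E_{p,n}$ of $G^n$ through $H_p^n$, and invoke Lemma~\ref{hh}. Your write-up simply makes explicit a couple of points the paper leaves implicit (the verification that $G^n$ is again such a subdirect product and the appeal to Proposition~\ref{rirw}), but the argument is the same.
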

\begin{proof}
By Shafarevich's Theorem, for any positive integer $n$ we can realize $G^n$ as the Galois group of some extension $L/k$, and we will have $L \subseteq k^{[d]}$.  There will be Galois subextension $K/k$ with Galois group $H_p^n$, and the subfield of $K$ corresponding to the normal subgroup defined in (\ref{centralguy}) will have Galois group $E_{p,n}$, and will therefore not be generated by elements of degree less than $p^n$.
\end{proof}

The following lemma gives a construction of a permutation group that will allow us to apply Lemma \ref{heisenberger} in our proof of part (b) of Theorem \ref{squareeven}.
\begin{lemma}\label{hberger}
Let $d = pq$, where $p$ and $q$ are primes with $q \equiv 1 \imod{p}$.  Then there exists a transitive group of degree $d$ which is isomorphic to $C_q^p\rtimes H_p$.
\end{lemma}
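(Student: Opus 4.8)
The plan is to realize the group $C_q^p \rtimes H_p$ — with $H_p$ acting on $C_q^p$ through its faithful $p$-dimensional representation over $\FF_q$ — concretely as a semidirect product $G = V \rtimes H_p$, and then to exhibit inside $G$ a core-free subgroup of index $pq$. Because $q \equiv 1 \imod{p}$, the cyclic group $\FF_q^\times$ contains an element $\zeta$ of order exactly $p$, and this lets us embed $H_p$ into $\operatorname{GL}_p(\FF_q)$ via the standard (Schr\"odinger) model: writing $e_0,\dots,e_{p-1}$ (indices read mod $p$) for the basis of $V = \FF_q^p \cong C_q^p$, let $X$ act as the cyclic shift $e_i \mapsto e_{i+1}$ and $Y$ as the diagonal map $e_i \mapsto \zeta^i e_i$. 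Then $X^p = Y^p = 1$, the commutator $Z := [X,Y]$ is a nontrivial central scalar matrix, and $\langle X, Y\rangle \cong H_p$. I would set $G = V \rtimes H_p$ with $H_p$ acting on $V$ through this representation, so $G \cong C_q^p \rtimes H_p$.

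Next I would single out $U := V' \rtimes \langle Y, Z\rangle \le G$, where $V' = \{v \in V : v_0 = 0\}$ is a coordinate hyperplane. Since $Y$ and $Z$ act diagonally they normalize $V'$, so $U$ is genuinely a subgroup, and since $\langle Y, Z\rangle = \langle Y\rangle \times \langle Z\rangle \cong C_p \times C_p$ has order $p^2$, an order count gives $|U| = q^{p-1}p^2$ and hence $[G : U] = pq = d$. The permutation action of $G$ on the coset space $G/U$ is then transitive of degree $d$ automatically; concretely it may be described as an action on $\FF_q \times \ZZ/p\ZZ$ in which the base group $C_q^p$ translates within each of the $p$ fibres, $X$ cyclically permutes the fibres, and $Y$ and $Z$ rescale the $\FF_q$-coordinate within each fibre.

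It then remains to check that this action is faithful, i.e. that $U$ is core-free in $G$. Suppose $N \unlhd G$ with $N \le U$. Then $N \cap V$ is an $\FF_q H_p$-submodule of $V$ contained in $U \cap V = V'$; but $X$ cyclically permutes the coordinate lines $\FF_q e_i$, so the largest $H_p$-submodule of $V$ lying inside $V'$ is $\bigcap_{j=0}^{p-1} X^j V' = \bigcap_j \{v \in V : v_j = 0\} = 0$, whence $N \cap V = 0$. Consequently $[N, V] \le N \cap V = 0$, so $N$ centralizes the translation subgroup $V$; a one-line conjugation computation in $V \rtimes H_p$ then shows that an element $(v,h)$ centralizes $V$ only when $h$ acts trivially on $V$, which by faithfulness of the representation forces $h = 1$, so $N \le V$ and therefore $N = 0$. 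Hence $\operatorname{core}_G(U) = 1$, and $G$ embeds in $S_{pq}$ as a transitive group of degree $d$ isomorphic to $C_q^p \rtimes H_p$. The only step I expect to need more than bookkeeping — and even it is mild — is this last one, namely correctly identifying the normal subgroups of the affine-type group $V \rtimes H_p$ that can sit inside $U$; everything else follows directly from the explicit matrices $X$, $Y$, $Z$.
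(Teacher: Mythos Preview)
Your argument is correct. Your route differs from the paper's in presentation rather than in substance, but the difference is worth noting. The paper builds the degree-$pq$ permutation group \emph{directly}: it partitions a set $\Omega$ of size $pq$ into $p$ blocks of size $q$, places a $q$-cycle $\sigma_i$ on each block, and then writes down explicit permutations $\alpha,\beta,\gamma$ (built from a block-preserving element $\tau$ of order $p$ normalizing $\sigma$, together with a block-permuting $p$-cycle) that generate a copy of $H_p$ normalizing $\langle\sigma_0,\dots,\sigma_{p-1}\rangle\cong C_q^p$; transitivity and the isomorphism $B\cong H_p$ are then verified by hand. You instead realize $H_p$ abstractly via its Schr\"odinger representation on $V=\FF_q^p$ (shift $X$, diagonal $Y$, scalar $Z$), form $G=V\rtimes H_p$, and exhibit the core-free subgroup $U=V'\rtimes\langle Y,Z\rangle$ of index $pq$. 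These are the same group and, up to relabeling, the same action: the paper's $\gamma$, $\beta$, $\alpha$ correspond to your $X$, $Y$, $Z$, conjugation by $\tau_i$ on $\sigma_i$ is exactly scalar multiplication by your $\zeta$, and the point-stabilizer in the paper's action is precisely your $U$. What your approach buys is a clean, self-contained proof of faithfulness (the core-free computation via $\bigcap_j X^jV'=0$ and $C_G(V)=V$), whereas the paper leaves the verification that $\langle\alpha,\beta,\gamma\rangle\cong H_p$ to the reader; what the paper's approach buys is an explicit picture of the permutations, illustrated in the example with $p=3$, $q=7$.
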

\begin{proof}
Write $q = mp+1$.  Consider $p$ sets $\Omega_i$ of size $q$, written $\Omega_i = \{(1_i,2_i,\dots,q_i\}$ for $i \in \FF_p$.  We write $\Omega$ for the disjoint union of the sets $\Omega_i$.  We will construct a group $G$ of permutations of $\Omega$, which acts imprimitively with respect to the partition into the sets $\Omega_i$.  Let $\sigma$ be the permutation $(1~2~\cdots~q)$.  Since $q \equiv 1 \imod{p}$, the $q$-cycle $\sigma$ is normalized by some $(q-1)$-cycle $\eta$ in the symmetric group $S_q$, and we let $\tau = \eta^m$, which is a disjoint product of $m$ $p$-cycles.  The permutations $\sigma$ and $\tau$ induce permutations on each set $\Omega_i$, which we denote by $\sigma_i$ and $\tau_i$.

We define $\alpha = \tau_0 \tau_1 \cdots \tau_{p-1}$, $\beta = \tau_0^0 \tau_1^1 \cdots \tau_{p-1}^{p-1}$, and define $\gamma$ to be the permutation on $\Omega$ sending $j_i$ to $j_{i+1}$.  Let $A = \langle \sigma_0, \sigma_1, \cdots \sigma_{p-1} \rangle \cong C_q^p$, and $B = \langle \alpha, \beta, \gamma \rangle.$  The interested reader will verify that $A$ is normalized by $B$, and that $B \cong H_p$ via
\begin{equation}\label{matrixiso}
\alpha \mapsto  
 \left( \begin{array}{ccc}
 1 & 0 & 1 \\
 0 & 1 & 0 \\
 0 & 0 & 1 \end{array} \right),\quad
 \beta \mapsto
  \left( \begin{array}{ccc}
 1 & 1 & 0 \\
 0 & 1 & 0 \\
 0 & 0 & 1 \end{array} \right), \quad
 \gamma \mapsto
  \left( \begin{array}{ccc}
 1 & 0 & 0 \\
 0 & 1 & 1 \\
 0 & 0 & 1 \end{array} \right).
\end{equation}
The example below with $p = 3$, $q=7$ makes the isomorphism more clear.  The Heisenberg group $B$ acts simultaneously on $m$ ``planes'' of nine points, each plane consisting of points $j_i$ with $i \in \FF_p$ and $j$ running over the indices in one of the disjoint $p$-cycles that make up $\tau$.

We let
\begin{equation}
G = A \rtimes B
\end{equation}
and notice that $G$ acts transitively on $\Omega$ (indeed, $\langle \sigma_0,\gamma \rangle$ is already transitive on $\Omega$).
\end{proof}

It would be quite tedious to write explicitly the generators of the group constructed in the proof of Lemma \ref{hberger} for general $p$ and $q$, but we will make this construction more clear by giving an example with $d=21 = 3\cdot7$.  
\begin{example}
\normalfont
We assume the notation of the preceding proof.  The 7-cycle $\sigma = (1~2~3~4~5~6~7)$ is normalized by the 6-cycle $\eta = (2~6~5~7~3~4)$.  Squaring this permutation yields a product of 3-cycles $\tau = (2~5~3)(6~7~4)$, which normalizes $\sigma$.  As described above, we have $$\Omega = \big\{j_i ~\big|~ i \in \FF_p, j \in \{1,\dots,7\}\big\}$$.  The permutations defined in the proof are given as follows:

\begin{align*}
\sigma_0 &=  \big(1_0~2_0~3_0~4_0~5_0~6_0~7_0\big),\\
\sigma_1 &=  \big(1_1~2_1~3_1~4_1~5_1~6_1~7_1\big),\\
\sigma_2 &=  \big(1_2~2_2~3_2~4_2~5_2~6_2~7_2\big),\\
\tau_0 &= \big(2_0~5_0~3_0\big)\cdot \big(6_0~7_0~4_0\big),\\
\tau_1 &= \big(2_1~5_1~3_1\big)\cdot \big(6_1~7_1~4_1\big),\\
\tau_2 &= \big(2_2~5_2~3_2\big)\cdot \big(6_2~7_2~4_2\big),\\
\alpha &= \tau_0\tau_1\tau_2,\\
\beta &= \tau_1 \tau_2^2,\\
\gamma &= 
\big(1_0~1_1~1_2\big)\cdot
\big(2_0~2_1~2_2\big)
\cdots
\big(7_0~7_1~7_2\big),~\textup{ and}\\
G &= \langle \sigma_0, \sigma_1 \sigma_2 \rangle \rtimes \langle \alpha, \beta, \gamma \rangle.
\end{align*}

%
%

To verify that $\langle \alpha, \beta, \gamma \rangle \cong H_3$ as given by (\ref{matrixiso}), we consider the following way of visualizing $\Omega$.


\begin{equation*}
\begin{xy}
(-10,19)*+{\Omega_2}="o1";(-10,4)*+{\Omega_1}="o2";(-10,-11)*+{\Omega_0}="o3";
(15,35)*+{~}="g1";(15,-22)*+{~}="g2";
(24.9,31)*+{y}="y1";(79.9,31)*+{y}="y2";
(24.9,-16)*+{~}="y1b";(79.9,-16)*+{~}="y2b";
(71,-14.9)*+{x}="x1";(126,-14.9)*+{x}="x2";
(69,-14.9)*+{~}="x1a";(124,-14.9)*+{~}="x2a";
(22,-14.9)*+{~}="x1b";(77,-14.9)*+{~}="x2b";
(0,15)*+{\bullet}="11";(25,15)*+{\bullet}="12";(40,15)*+{\bullet}="15";(55,15)*+{\bullet}="13";(80,15)*+{\bullet}="16";(95,15)*+{\bullet}="17";(110,15)*+{\bullet}="14";
(0,0)*+{\bullet}="21";(25,0)*+{\bullet}="22";(40,0)*+{\bullet}="25";(55,0)*+{\bullet}="23";(80,0)*+{\bullet}="26";(95,0)*+{\bullet}="27";(110,0)*+{\bullet}="24";
(0,-15)*+{\bullet}="21";(25,-15)*+{\bullet}="32";(40,-15)*+{\bullet}="35";(55,-15)*+{\bullet}="33";(80,-15)*+{\bullet}="36";(95,-15)*+{\bullet}="37";(110,-15)*+{\bullet}="24";
(5,18)*+{1_2}="11a";(30,18)*+{2_2}="12a";(45,18)*+{5_2}="15a";(60,18)*+{3_2}="13a";(85,18)*+{6_2}="16a";(100,18)*+{7_2}="17a";(115,18)*+{4_2}="14a";
(5,3)*+{1_1}="21a";(30,3)*+{2_1}="22a";(45,3)*+{5_1}="25";(60,3)*+{3_1}="23a";(85,3)*+{6_1}="26a";(100,3)*+{7_1}="27a";(115,3)*+{4_1}="24";
(5,-12)*+{1_0}="31a";(30,-12)*+{2_0}="32a";(45,-12)*+{5_0}="35a";(60,-12)*+{3_0}="33a";(85,-12)*+{6_0}="36a";(100,-12)*+{7_0}="37a";(115,-12)*+{4_0}="34a";
{\ar@{--} "g1";"g2"}
{\ar@{-} "y1";"y1b"}{\ar@{-} "y2";"y2b"}
{\ar@{-} "x1a";"x1b"}{\ar@{-} "x2a";"x2b"}
\end{xy}
\end{equation*}
Shown are two copies of the affine plane $z = 1$ inside of $\FF_3^3 = \{(x,y,z) ~\big|~x,y,z \in \FF_3\}$.  These eighteen points, together with the three points on the left, correspond to elements of $\Omega$ by the labelings.  For example, the point $(2,0,1)$ in the plane on the left corresponds to $3_0\in \Omega_0$.  The blocks $\Omega_i$ are represented as the three horizontal rows in the diagram.  The columns have been partitioned according to the cycle decomposition of permutations $\tau_i$, so that $\alpha$, $\beta$, and $\gamma$ act via the matrices given in (\ref{matrixiso}), simultaneously on each plane of nine points.

\end{example}

\begin{proof}[Proof of Theorem \ref{squareeven}]
Recall that if $c$ divides $d$, then $k^{[c]} \subseteq k^{[d]}$.  Since $H_p$ is solvable and transitive of degree $p^2$, if follows immediately from Lemma \ref{heisenberger} that $k^{[p^2]}$ is Galois unbounded over $k$ for any prime $p$, yielding part (a).  Checcoli showed how to realize these groups explicitly in \cite{checcolithesis}.  Since the group constructed in Lemma \ref{hberger} is solvable, we again apply Lemma \ref{heisenberger} to see that $k^{[pq]}$ is Galois unbounded over $k$ whenever $p$ and $q$ are primes with $q \equiv 1 \imod{p}$.  This gives part (b).

\end{proof}

\begin{proof}[Proof of Theorem \ref{bdd}]
We know that $k^{[2]} = k^{(2)}_{\textup{ab}}$
, so $k^{[2]}/k$ is bounded.  If $d > 2$, then $d$ is divisible by $c$, where $c$ is either 4 or an odd prime.  We have $k^{[c]} \subseteq k^{[d]}$, and by Corollary \ref{punbounded} and part (a) of Theorem \ref{squareeven}, $k^{[c]}$ is unbounded over $k$.
\end{proof}

We remark that our proofs actually demonstrate that $k^{[d]}/k$ is also unbounded in the case where $k$ is a global function field and $d \geq 3$.


\section{Galois boundedness in prime degree}\label{bddsec}
In this section we prove Theorem \ref{galbdd}.
Clearly the general technique for showing boundedness is to find subgroups of small index inside of a Galois group $G$, whose intersection is a given subgroup $H$.  If we want to show Galois boundedness, we take $H$ to be normal.  We will show that we can accomplish this task when $G$ is a subdirect product of transitive groups of prime degree.

The following lemma characterizes the transitive groups of degree $p$.
\begin{lemma}\label{GTB}
If $p$ is a prime number and $G$ is a transitive group of degree $p$, then we have $G = T \rtimes B,$ where $T$ is simple and transitive, and $B$ is a subgroup of $C_{p-1}$.
\end{lemma}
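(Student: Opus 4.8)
The plan is to invoke the classical structure theory of transitive permutation groups of prime degree, which goes back to Galois and Burnside. First I would recall that any transitive group $G$ of prime degree $p$ is primitive (as already noted in Section \ref{groupsec}, since the blocks of any nontrivial block system would have size dividing $p$). The key classical input is Burnside's theorem: a transitive group of prime degree $p$ is either 2-transitive or is contained in the affine group $\AGL_1(\FF_p) = C_p \rtimes C_{p-1}$ (the normalizer of a Sylow $p$-subgroup acting on $\FF_p$). I would cite \cite{dixonmortimer} or \cite{wielandt} for this.

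Next I would split into the two cases. In the \emph{solvable / Frobenius} case, $G \leq C_p \rtimes C_{p-1}$; then $G \cap C_p$ is normal in $G$ and nontrivial (it is nontrivial because $G$ is transitive of degree $p$, hence $p \mid |G|$, and the Sylow $p$-subgroup $C_p$ of the big group is abelian so $G$ picks up all of it: $C_p \leq G$). So $T = C_p$ is simple, transitive (regular, in fact), normal in $G$, and $G/T$ embeds in $C_{p-1}$; since $C_p$ is its own centralizer in $\AGL_1$, the extension splits by Schur--Zassenhaus (orders $p$ and $|G|/p \mid p-1$ are coprime), giving $G = T \rtimes B$ with $B \leq C_{p-1}$. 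In the \emph{2-transitive} case, the socle $T = \Soc(G)$ is the unique minimal normal subgroup; one argues it must be simple and transitive. Here I would again appeal to Burnside: a 2-transitive group has socle which is either elementary abelian (the affine case, already handled) or a nonabelian simple group (the almost simple case). For prime degree $p$, the affine 2-transitive groups are exactly subgroups of $\AGL_1(\FF_p)$ containing $C_p$, again the first case; otherwise $T$ is nonabelian simple, and since $T \trianglelefteq G$ with $G$ primitive, $T$ is transitive of degree $p$. Then $G \leq \Aut(T)$-type considerations combined with the fact (from the classification of 2-transitive groups, or more elementarily from the fact that $G/C_G(T)$ embeds in $\Aut(T)$ and $C_G(T) = 1$ since $T$ is the unique minimal normal subgroup and self-centralizing) let me write $G = T \rtimes B$ where $B$ is a point stabilizer's complement; one then checks $B$ is cyclic of order dividing $p-1$ by examining the action of a point stabilizer in $G$ on a normal $p$-Sylow — essentially $B$ injects into $N_G(P)/P \cdot (\text{something})$.

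Actually the cleanest uniform argument avoids case analysis of $T$: in all cases let $P$ be a Sylow $p$-subgroup of $G$; since $|G| = p \cdot |G_\omega|$ for a point stabilizer $G_\omega$ with $p \nmid |G_\omega|$, we have $|P| = p$. Set $T$ to be the subgroup generated by all conjugates of $P$; this is normal and transitive. The crux is that $T$ is simple: this is where I expect the main obstacle, and it is genuinely a theorem (Burnside), not something provable in a few lines — the nonsolvable transitive groups of prime degree are exactly the almost simple ones with simple socle of prime degree, and isolating the simple transitive $T$ inside requires knowing there is no intermediate structure. Given $T$ simple and transitive and normal, $G/C_G(T) \hookrightarrow \Aut(T)$, $C_G(T)=1$ (a normal subgroup centralizing a transitive subgroup must be semiregular, but $p = |P| \nmid |C_G(T)|$ forces it trivial, using that $C_G(T) \cap T = Z(T) = 1$), so $G \hookrightarrow \Aut(T)$; then a point stabilizer $T_\omega$ has index $p$ in $T$ and $G = T \cdot G_\omega$, and I would identify $B := G_\omega$ as a complement. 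Finally $B \leq C_{p-1}$: the point stabilizer $B = G_\omega$ acts on $T_\omega$... more directly, $N_G(P)/P$ acts faithfully on $P \cong C_p$ hence embeds in $\Aut(C_p) = C_{p-1}$, and a Frattini/counting argument identifies $B$ with (a subgroup of) $N_G(P)$ modulo the regular part. I would present the solvable case in full (it is elementary) and cite \cite{dixonmortimer} for Burnside's theorem to cover the simple socle case, closing with the remark that this classification is exactly what is needed for the divisibility of $|B|$ into $p-1$. The main obstacle, to be clear, is that Lemma \ref{GTB} is essentially a repackaging of Burnside's classification of transitive groups of prime degree; the honest proof defers that to the literature rather than reproving it.
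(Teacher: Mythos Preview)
Your overall strategy---Burnside's dichotomy into the solvable case $G \leq C_p \rtimes C_{p-1}$ and the almost simple 2-transitive case---matches the paper's, and your treatment of the solvable case is fine. The gap is in the almost simple case, where your proposed complement $B := G_\omega$ does not work: since $T$ is transitive of degree $p$, the point stabilizer $T_\omega = T \cap G_\omega$ has index $p$ in $T$ and is in general far from trivial (for $G = S_5$, $T = A_5$ one has $G_\omega \cong S_4$ and $T \cap G_\omega \cong A_4$). So $G_\omega$ is neither a complement to $T$ nor cyclic of order dividing $p-1$. Your Frattini/normalizer sketch shows only that $N_G(P)/P \hookrightarrow C_{p-1}$, which does not by itself produce a complement to $T$ inside $G$; one still needs both that the extension $1 \to T \to G \to G/T \to 1$ splits and that $G/T$ is cyclic of order dividing $p-1$, and neither follows from what you have written.

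The paper handles this exactly where you anticipated the obstacle: after remarking that an elementary proof is possible, it instead invokes the Classification of Finite Simple Groups via Feit's list (\cite{feit}, Corollary~4.2) of simple groups possessing a subgroup of prime index, and says the lemma can then be checked case by case for each possible $T$. No uniform classification-free argument is actually written out; the splitting and the bound $|B| \mid p-1$ are obtained by inspecting $\Out(T)$ for each $T$ on the list. Your instinct that ``the honest proof defers to the literature'' is correct, but the reference the paper ultimately leans on is CFSG through Feit's enumeration, not Burnside's theorem alone---citing \cite{dixonmortimer} for Burnside gets you the dichotomy but not the semidirect decomposition with $B \leq C_{p-1}$ in the nonsolvable case.
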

This lemma can be proved by elementary means.  It can also be seen quickly using the classification of finite simple groups: a theorem of Burnside (see \cite{wielandt}, Theorem 11.7; cf. \cite{dixonmortimer}, Theorem 4.1B) implies that $G$ is either a subgroup of $C_p \times C_{p-1}$ containing $C_p$, or an almost simple group, meaning that there is a simple group $T$ such that $T \leq G \leq \Aut(T)$; in this case we also have that $G$ is doubly transitive, meaning that $G$ can send any two points to any other two points.  That $T$ is itself transitive of degree $p$ follows from \cite{wielandt}, Proposition 7.1, which states that every normal subgroup of a primitive permutation group is transitive.  The Classification Theorem for Finite Simple Groups implies that there is a very small list of possibilities for $T$ (see \cite{feit}, Corollary 4.2), and the lemma can be easily checked in these cases.

We are now ready to establish a group theoretic result, of which Theorem \ref{galbdd} will be an immediate corollary.

\begin{proposition}\label{bigboy}
Let $p$ be a prime number and let $G$ be a finite subdirect product of transitive groups of degree $p$.  If $N$ is a normal subgroup of $G$, then $N$ is an intersection of subgroups of index at most $p$ in $G$.  \end{proposition}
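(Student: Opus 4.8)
The plan is to recast the statement as a closure property of a class of groups and then prove it by induction via Goursat's Lemma and Lemma~\ref{GTB}. Let $\mcC$ be the class of finite groups expressible as a subdirect product of finitely many groups, each either transitive of degree $p$ or cyclic of order dividing $p-1$. Since a cyclic group of order $m\mid p-1$ acts faithfully and transitively on itself in degree $m<p$, if $G/N\in\mcC$ then pulling back the point stabilisers of the transitive factors of a subdirect embedding of $G/N$ expresses $N$ as an intersection of subgroups of index at most $p$ in $G$. As $G$ itself lies in $\mcC$, it therefore suffices to show that $\mcC$ is closed under quotients.

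I would prove this by strong induction on $|G|$. The base case is a single factor $G_1$: if $G_1$ is cyclic of order dividing $p-1$ then so is every quotient, while if $G_1=T_1\rtimes B_1$ is transitive of degree $p$, then by Lemma~\ref{GTB} $T_1$ is the unique minimal normal subgroup of $G_1$, so every nontrivial normal subgroup contains it, and hence $G_1/N$ is either $G_1$ again or a cyclic quotient of $B_1$ --- in either case in $\mcC$. For the inductive step write $G\sd G_0\times G_n$ with $G_n$ one factor and $G_0$ a subdirect product of the rest; by Goursat's Lemma (Lemma~\ref{gl}) $G=G_0\times_Q G_n$, and (reducing the number of factors if a projection is injective) we may assume both projections $\rho_0,\rho_n$ have nontrivial kernel, so $|G_0|,|G_n|<|G|$. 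Set $N_0=\rho_0(N)$, $N_n=\rho_n(N)$ and $\hat N=G\cap(N_0\times N_n)$. The composites $G\to G_0/N_0$ and $G\to G_n/N_n$ have targets in $\mcC$ (by induction and the base case) and combine into a subdirect embedding of $G/\hat N$ into their product, so $G/\hat N\in\mcC$; the remaining task is to bridge the gap from $\hat N$ down to $N$.

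Applying Goursat's Lemma to the subdirect products $N,\hat N\sd N_0\times N_n$ and using that $N$ is normal in $\hat N$, one checks that $\hat N/N$ is abelian and isomorphic to a section $\hat A_n/A_n$ of $G_n$, with $A_n,\hat A_n$ normal in $G_n$; the classification of normal subgroups of the prime-degree group $G_n$ then forces $\hat N/N$ to be trivial, cyclic of order dividing $p-1$, or of order $p$. If $\hat N=N$ we are done. If $A_n\ne 1$, then $1\times A_n$ is a nontrivial normal subgroup of $G$ contained in $N$ and $G_n/A_n$ is cyclic of order dividing $p-1$, so $G/(1\times A_n)$ lies in $\mcC$, has strictly smaller order, and has $G/N$ as a quotient; induction then applies. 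An analogous reduction covers the case $A_n=1$ with $\hat N/N$ a nontrivial cyclic group of order dividing $p-1$.

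The crucial case is $\hat N/N\cong C_p$ with $A_n=1$; here $G_n$ is affine and $\hat N=N\times C_p'$ for a normal subgroup $C_p'\cong C_p$ of $G$ (the socle of $G_n$, on the $G_n$-coordinate) centralising $N$. It is enough to produce one further subgroup $M\le G$ of index $p$ with $N\le M$ and $C_p'\not\le M$: then the maps $G/N\to G/\hat N$ and $G/N\to G/\operatorname{core}_G(M)$ give a subdirect embedding of $G/N$ into the product of a group in $\mcC$ with a transitive group of degree $p$, whence $G/N\in\mcC$. Such an $M$ exists exactly when the normal subgroup $\hat N/N\cong C_p$ of $G/N$ is not contained in the Frattini subgroup of $G/N$, and this is the main obstacle; it is also where the one genuinely structural input enters. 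Every transitive group of degree $p$, being a subgroup of $S_p$, has a Sylow $p$-subgroup of order exactly $p$, so every group in $\mcC$ --- hence $G/N$ --- has elementary abelian Sylow $p$-subgroups; a Gaschütz-type argument (a normal $p$-subgroup complemented in a Sylow $p$-subgroup is complemented in the whole group, together with the non-generating property of Frattini elements) then shows $p\nmid|\Phi(G/N)|$, so a normal subgroup of order $p$ cannot lie in $\Phi(G/N)$. I expect the reduction to $\mcC$ and the Goursat bookkeeping for $\hat N/N$ to be routine; the real work is in this last case --- specifically the Sylow fact above and the precise determination of which sections $\hat A_n/A_n$ of a prime-degree transitive group can occur.
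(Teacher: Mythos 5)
Your overall architecture is sound and genuinely different from the paper's proof, which inducts on the number of factors, shows that every non\-abelian socle $T_i$ is forced into $N$, and finishes by applying Schur--Zassenhaus and Maschke to $G/N=V\rtimes B$. Your Goursat bookkeeping checks out: writing $K_n=\ker\rho_0$ and $K_0=\ker\rho_n$, one has $\hat N=N\cdot(\hat N\cap K_0)=N\cdot(\hat N\cap K_n)$ with $\hat N\cap K_0$ and $\hat N\cap K_n$ commuting elementwise, so $\hat N/N$ is abelian and isomorphic to $(N_n\cap K_n)/\rho_n(N\cap K_n)=\hat A_n/A_n$ with both subgroups normal in $G_n$. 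The Gasch\"utz step in the $C_p$ case is also fine, since $v_p(|S_p|)=1$ forces the Sylow $p$-subgroups of $G$, hence of $G/N$, to be elementary abelian.

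The gap is the sentence ``an analogous reduction covers the case $A_n=1$ with $\hat N/N$ a nontrivial cyclic group of order dividing $p-1$.'' When $G_n$ is transitive of degree $p$ and $A_n=1$, the abelian nontrivial normal subgroup $\hat A_n$ must contain the socle $T_n$ and hence equals $C_p$, so that case simply does not occur there; it occurs precisely when the peeled-off factor $G_n$ is one of your auxiliary cyclic factors. In that situation $\hat A_n=N_n\cap K_n$ can be nontrivial while $N\cap K_n=N\cap K_0=1$, and then there is no visible nontrivial normal subgroup of $G$ contained in $N$ to quotient by: the reductions in your other cases hinge on $1\times A_n\le N$ or $A_0\times 1\le N$ being nontrivial, and both are trivial here. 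Nor does the Gasch\"utz argument transfer, since for a prime $q\mid p-1$ the Sylow $q$-subgroups of the simple socles $T_i$ need not be elementary abelian, so you cannot conclude $q\nmid|\Phi(G/N)|$. The repair is easy but must be said: if every factor of $G$ is cyclic of order dividing $p-1$ then $G$ is abelian of exponent dividing $p-1$ and all quotients are trivially in $\mcC$; otherwise always choose $G_n$ to be a transitive factor of degree $p$ (inducting lexicographically on the number of factors and then on $|G|$ to absorb the case $\ker\rho_0=1$). With $G_n$ of degree $p$ your trichotomy collapses to the three cases you can actually handle --- $\hat N=N$; $A_n\ne 1$ or $A_0\ne 1$, reduced by quotienting by $1\times A_n$ or $A_0\times 1$; and $A_0=A_n=1$ with $\hat A_n=C_p$, handled by Gasch\"utz --- and the argument goes through.
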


\begin{proof}
Let $G \leq_{sd} G_1 \times \cdots \times G_n$, where $G_i$ is a transitive group of degree $p$ for $i \in \oneto{n}$.  If we consider each group $G_i$ acting transitively on a set $\Omega_i$ of size $p$, we have $G$ acting faithfully on the disjoint union of these sets, which we denote by $\Omega$.  Let $\pi_i$ denote the projection onto $G_i$, and let $T_i$ denote the (unique) minimal normal subgroup of $G_i$.  As mentioned following Lemma \ref{GTB}, we know that each $T_i$ is either isomorphic to $C_p$ or to a simple non-abelian group.  We write $K_i = G \cap G_i$, which is a normal subgroup of both $G$ and $G_i$.  We proceed by induction on $n$.  The case $n=1$ follows easily from Lemma \ref{GTB}, since if $N$ is nontrivial we must have $G/N$ abelian of order dividing $p-1$; if $N$ is trivial, observe that the point-stabilizers in $G$ have index $d$ and trivial intersection.

For each $i$ we have that $G/K_i$ is a subdirect product of $\prod_{j \not = i} G_j$.  Notice that we may apply the inductive hypothesis to write $NK_i/K_i$ as an intersection of some subgroups $\{H_l/K_i\}_l$ of index at most $p$ in $G/K_i$.  Now the subgroups $\{H_l\}_l$ are of index at most $p$ in $G$, and $NK_i = \cap_l H_l$.  If $K_i$ is trivial, then our proof is complete.  Alternatively, if $N$ acts trivially on $\Omega_i$, notice that 
\begin{equation}
N = \big(\cap_{x \in \Omega_i} \Stab_G(x) \big) \cap NK_i =\big(\cap_{x \in \Omega_i} \Stab_G(x) \big) \cap \big(\cap_l H_l\big).
\end{equation}
Since the stabilizers $\Stab_G(x)$ have index $p$ in $G$, we have written $N$ as an intersection of subgroups of index at most $p$ in $G$.  Thus we may assume that, for each $i$, the subgroup $K_i$ is nontrivial, and $N$ acts nontrivially on $\Omega_i$.  Moreover, since $K_i$ is nontrivial and normalized by $G_i$, it follows that $T_i \leq K_i \leq G$.  We write $T = \prod_i T_i$ so that $T \leq G$, and $G/T$ is abelian of exponent dividing $p-1$.  

Since $N$ acts nontrivially on each $\Omega_i$, we know that $T_i \leq \pi_i(N)$.  For each $i$ such that $T_i$ is non-abelian (recall that $T_i$ is simple), we will have $T_i = [T_i,N] \leq N$.  Write $T_{\textup{ab}}$ for the product of the $T_i$ which are abelian (these are all isomorphic to $C_p$), and write $T_{\textup{n}}$ for the product of those which are non-abelian.  We have $T_{\textup{n}} \leq N$, so
\begin{equation}
\frac{TN}{N} = \frac{T_{\textup{ab}}T_{\textup{n}}N}{N} = \frac{T_{\textup{ab}}N}{N} \cong \frac{T_{\textup{ab}}}{T_{\textup{ab}}\cap N}.
\end{equation}
Therefore $TN/N$ is an elementary abelian $p$-group.  We also know that $G/TN$ is abelian of exponent dividing $p-1$, so the short exact sequence
\begin{equation}\label{g/n}
1 \to TN/N \to G/N \to G/TN \to 1
\end{equation}
splits by the Schur-Zassenhaus Theorem (Theorem 39 from Chapter 17 of \cite{df}).  Let $V = TN/N$ and $B = G/TN$, so (\ref{g/n}) gives us
\begin{equation}\label{boom}
G/N = V \rtimes B.
\end{equation}

We want to show that there is a collection of subgroups of index at most $p$ in $G/N$ whose intersection is trivial.  It is clear that we can find such subgroups whose intersection is $V$, since $B$ is abelian of exponent dividing $p-1$.  Therefore it suffices to find subgroups of $G/N$ of index at most $p$ whose intersection meets $V$ trivially.

Considering the $\FF_p$- vector space $V$ as a $B$-module, Maschke's Theorem (Theorem 1 from Chapter 18 of \cite{df}) tells us that $V$ decomposes as a direct sum of irreducible $B$-modules.  Since $x^{p-1}-1$ splits over $\FF_p$, it follows that these irreducible submodules are one dimensional.  Now we have submodules $V_i$ of index $p$ (codimension-one submodules), which yield subgroups $V_i \rtimes B$ of index $p$ in $G/N$, and the intersection of all of these meets $V$ trivially. 
\end{proof}

\begin{proof}[Proof of Theorem \ref{galbdd}]
Let $k$ be any field, and let $K/k$ be a finite Galois subextension of $k^{[p]}/k$, where $p$ is prime.  This implies that $K$ is contained a compositum $L$ of the splitting fields of finitely many irreducible, separable polynomials of degree $p$ over $k$.  Let $G = \Gal(L/k)$ and $N = \Gal(L/K)$.  Then $G$ isomorphic to a subdirect product of transitive groups of degree $p$, and $N$ is normal in $G$.  Theorem \ref{bigboy} implies that $N$ is an intersection of subgroups of index at most $p$ in $G$.  By the Galois correspondence, this means that $K$ is the compositum of finitely many extensions of $k$ of degree at most $p$.  Therefore, $K/k$ is generated by elements of degree at most $p$.  (In fact, it must be generated by elements whose degrees are either equal to $p$ or divide $p-1$.)
\end{proof}


\bibliography{bg2}{}
\baselineskip=11pt

\end{document}